\def\titlerunning#1{\gdef\titrun{#1}}
\def\author#1{\gdef\autrun{\def\and{\unskip, }#1}\gdef\@author{#1}}
\def\address#1{{\def\and{\\\hspace*{15.6pt}}\renewcommand{\thefootnote}{}\footnote{#1}}\markboth{\autrun}{\titrun}}
\def\email#1{email: \href{mailto:#1}{#1} }
\newenvironment{dedication}{\itshape\center}{\par\medskip}
\newtheorem{thm}{Theorem}[section]
\newtheorem{prop}[thm]{Proposition}
\theoremstyle{definition}
\newtheorem{exa}[thm]{Example}
\numberwithin{equation}{section}
\newcommand{\beq}{\begin{eqnarray}}
\newcommand{\eeq}{\end{eqnarray}}
\newcommand{\bq}{\begin{equation}}
\newcommand{\eq}{\end{equation}}
\newcommand{\beqn}{\begin{eqnarray*}}
\newcommand{\eeqn}{\end{eqnarray*}}
\def\e{\mathop{\rm e}\nolimits}
\def\Id{\mathop{\rm Id}\nolimits}
\def\DD{\mathop{\bf D\kern 0pt}\nolimits}
\def\SS{\mathop{\bf S\kern 0pt}\nolimits}
\def\ZZ{\mathop{\bf Z\kern 0pt}\nolimits}
\def\TT{\mathop{\bf T\kern 0pt}\nolimits}
\def\virgp{\raise 2pt\hbox{,}}
\def\cdotpv{\raise 1pt\hbox{ ;}}
\def\eps{\varepsilon}
\def\beq{\begin{equation}}
\def\eeq{\end{equation}}
\def\cdotv{\raise 2pt\hbox{,}}
\def\H{\mathop{\mathbb H\kern 0pt}\nolimits}
\def\R{\mathop{\mathbb R\kern 0pt}\nolimits}
\def\C{{\mathbb C}}
\def\N{{\mathbf N}}
\def\Sch{{\mathcal S}}
\def\U{\mathcal U}
\def\virgp{\raise 2pt\hbox{,}}
\def\({\left(}
\def\){\right)}
\def\<{\left\langle}
\def\>{\right\rangle}
\def\le{\leqslant}
\def\ge{\geqslant}
\newcommand{\1}{\mathbb  I}
\begin{document}

\titlerunning{Adiabatic and non adiabatic evolution...}

\title{\textbf{Adiabatic and non-adiabatic evolution of wave packets and applications to initial value representations}}

\author{Clotilde Fermanian Kammerer  \and Caroline Lasser   \and Didier Robert}

\date{}

\maketitle

\address{C. Fermanian Kammerer: Universit\'e Paris Est Cr\'eteil; \email{clotilde.fermanian@u-pec.fr} \and C. Lasser: Technische Universit\"at M\"unchen; \email{classer@ma.tum.de} \and
 D. Robert: Universit\'e de Nantes; \email{didier.robert@univ-nantes.fr}}

\begin{dedication}
To Ari Laptev for his $70^{\rm th}$ birthday
\end{dedication}


\begin{abstract} We review some recent results obtained for  the  time evolution of wave packets 
for systems of equations of pseudo-differential type, including Schr\"odinger ones, and discuss their application to the approximation of the associated 
unitary propagator.  We start with scalar equations,  propagation of coherent states, and applications to the Herman--Kluk approximation. 
Then we discuss the extension of  these results to systems with eigenvalues of constant multiplicity or with  smooth crossings.
 \end{abstract}

\maketitle


\section{Introduction}\label{intro}
We consider semi-classical systems of the form 
\begin{equation}\label{system}
i\eps \partial_t \psi^\eps(t) = \widehat {H(t)} \psi^\eps(t),\;\;\psi^\eps_{|t=t_0}=\psi^\eps_0
\end{equation}
where $(\psi^\eps_0)$ is a bounded family in $L^2(\R^d,\C^N)$, $\| \psi^\eps_0\|_{L^2(\R^d,\C^N)} =1$ and  $\hat H(t)$ is the $\eps$-Weyl quantization  of a smooth Hermitian symbol $H(t,x,\xi)$ satisfying suitable growth assumptions. We are interested in approximate realizations of the unitary propagator associated with this equation relying on the use of continuous superpositions of Gaussian wave packets. Before explaining these ideas in more detail (in particular the definition of the quantization that the reader will find below), let us first  emphasize different types of systems  that we have in mind.

\medskip

The simplest 
 case is the one where $H(t)$ is independent of $(x,\xi)$, which implies that  the operator $\hat H(t)$ coincides with the matrix $H(t)$.
In this case it is known that for $N\geq 2$ new phenomena appear by comparison with the scalar case $N=1$. 
When the eigenvalues of $H(t)$ are not crossing, the adiabatic  theorem says that
 if $\psi^\eps_0$  is an eigenfunction of $H(t_0)$, then at every time $\psi^\eps(t)$ has an asymptotic expansion  for $\epsilon\rightarrow 0$, and the leading term is an eigenfunction of $H(t)$. The first complete proof was given  by Friedrichs (\cite{Fri56}  after first  results by Born-Fock \cite{BoFo28} and Kato \cite{Ka50}. In \cite[Part II]{Fri56}, Friedrichs considered
  a non-adiabatic   situation where $H(t)$ is a $2\times 2$ Hermitian matrix with two analytic  eigenvalues $h_\pm(t)$ such that 
  $h_+(t)\neq h_-(t)$ for $t\neq 0$ and
  $$(h_+-h_-)(0)=0,\;\; \frac{d}{dt}(h_+-h_-)(0)\neq 0 .$$
  
In this spirit, we here consider a toy model for  space dependent Hamiltonians of the following form: We choose $d=1$, $N=2$, $\theta\in{\R}_+$, $k\in\R^*$, and
\beq\label{ToMo}
\widehat H_{k,\theta}=\frac{\eps}{i}\frac{d}{dx}\1_{\C^2} + kx\begin{pmatrix} 0&{\rm e}^{i\theta x}\\ {\rm e}^{-i\theta x}&0\end{pmatrix}.
\eeq
Its semiclassical symbol $H_{k,\theta}(x,\xi)=\xi +kx \begin{pmatrix} 0&{\rm e}^{i\theta x}\\ {\rm e}^{-i\theta x}&0\end{pmatrix}$
 has the eigenvalues and associated eigenvectors (the latter depending only in $x$)
 \begin{equation}\label{eigenvalues:TM}
 h_\pm (x,\xi) = \xi \pm kx,\qquad 
 \vec V_\pm(x) = \frac{1}{\sqrt 2}\begin{pmatrix}{\rm e}^{i\theta x} \\ \pm 1\end{pmatrix}
 \end{equation}
 So for $k\neq 0$ we have a  crossing at $x=0$ that we call {\it smooth crossing} because there exists smooth eigenvalues and eigenprojectors. We shall use this toy-model all along this article.
 Notice that one can prove that the operator $\widehat H_{k,\theta}$ is essentially self-adjoint in $L^2(\R, \C^2)$  by using a commutator criterion with the standard harmonic oscillator (see \cite[Appendix~A]{MaRo}).
As we shall see later,
the solutions of the  Schr\"odinger  equation for $\widehat H_{k,\theta}$ can be   computed
 by solving an ODE  asymptotically  as $\eps\rightarrow 0$, see \eqref{eq:toto}, and by  using non-adiabatic  asymptotic  results from Friedrichs \cite{Fri56} and Hagedorn \cite{Hag89}. 
 
 \vskip 0.2cm

More generally, of major interest because of their applications to molecular dynamics, are
  the Schr\"odinger Hamiltonians with matrix-valued potential considered by Hagedorn in his monograph \cite[Chapter~5]{Hag94},
\begin{equation}\label{ex:hag}
\widehat H_S = -\frac{\eps^2}{2}\Delta_x\, \1_{\C^N} + V(x),\quad V\in
{\mathcal C}^\infty(\R^d,\C^{N\times N}),
\end{equation}
or the models  arising in solid state 
physics in the context of Bloch band decompositions studied by  Watson and Weinstein in~\cite{WW}.
\begin{equation}\label{ex:WW}
\widehat H _A= A(-i\eps\nabla_x) + W (x) \1_{\C^2},\quad A\in{\mathcal C}^\infty(\R^d,\C^{N\times N}),\quad 
W\in{\mathcal C}^\infty(\R^d,\C). 
\end{equation} 

\medskip

The notation  $\widehat H$ refers to the Weyl quantization that we shall use extensively in this article. 
For $a\in{\mathcal C}^\infty(\R^{2d}, \C^{N,N})$ with adequate control on the growth of derivatives, the operator $\widehat a$ is defined 
 by its action on functions $f\in{\mathcal S}(\R^d,\C^N)$:
$${\rm op}^w_\eps(a)f(x):= \widehat a f(x) := (2\pi\eps)^{-d} \int_{\R^d} a\left({x+y\over 2}, \xi\right) {\rm e}^{{i\over \eps} \xi\cdot(x-y)} f(y) dy\, d\xi.$$ 
It turns out that the propagator ${\mathcal U}_H^\eps(t,t_0)$ associated with $\widehat H$  is well defined according to~\cite[Theorem~5.15]{MaRo} provided 
  that the map $(t,z)\mapsto H(t,z)$ is in ${\mathcal
  C}^\infty(\R\times \R^{2d},\C^{N\times N})$ valued in the set of self-adjoint matrices  and that it has  subquadratic growth, i.e.
\begin{equation}\label{hyp:H}
\forall \alpha\in \N^{2d} ,\;\;|\alpha|\geq 2,\;\;
\exists C_\alpha>0,\;\;\sup_{(t,z)\in\R\times \R^{2d}}\| \partial^\alpha H(t,z) \|_{\C^{N,N}}\leq C_\alpha.
\end{equation}
Using the commutator methods of~\cite{MaRo}, we could extend main of our results to a more general setting.
However, the assumptions~\eqref{hyp:H} are enough to  guarantee the existence of solutions to equation~(\ref{system}) in $L^2(\R^d,\C^N)$. 
One of our objectives here is to describe different Gaussian-based approximations of the semi-group  ${\mathcal U}_H^\eps(t,t_0)$ in the limit $\eps\rightarrow 0$.

\medskip

Let  $g^\eps_z$ denote the Gaussian wave packet centered in $z=(q,p)\in\R^{2d}$ with standard deviation $\sqrt \eps$:
\begin{equation}\label{def:gz}
g^\eps_z(x)= (\pi\eps)^{-d/4} {\rm exp}\left( -{1 \over 2 \eps}|x-q|^2 +{i\over \eps}p\cdot (x-q) \right).
\end{equation}
The family of wave packets $(g^\eps_z)_{z\in\R^{2d}}$ forms a continuous frame and 
provides for all square integrable functions $f\in L^2(\R^d)$ the reconstruction formula
\begin{equation}\label{reconstruction}
f(x)= (2\pi\eps)^{-d} \int_{z\in\R^{2d}}\langle g^\eps_z,f\rangle g^\eps_z (x) dz.
\end{equation}
The leading idea is then to write the unitary propagation of general, square integrable initial data $\psi^\eps_0\in L^2(\R^d)$ as 
\begin{equation}\label{unitaryprop}
{\mathcal U}^\eps_H(t,t_0)\psi^\eps_0=(2\pi\eps)^{-d}  \int_{z\in\R^{2d}} \langle  g^\eps_z,\psi^\eps_0\rangle 
\,{\mathcal U}^\eps_H(t,t_0)g^\eps_z\, dz,
\end{equation}
and to take advantage of the specific properties of the propagation of Gaussian states to obtain an integral representation that allows in particular for an efficient numerical realization of the propagator.

\medskip 
Such a program has been completely accomplished in the scalar case ($N=1$). It appeared first in the 80's in  theoretical chemistry~\cite{HK,Kay1,Kay2} and has led to the so-called Herman--Kluk approximation.   The mathematical proof of the convergence of this approximation is more recent~\cite{RS,R}.
Here, we revisit these results in Section~\ref{sec:scalar} and Section~\ref{sec:proofs}, and discuss some extensions 
to the case of systems ($N\ge1$), first for the gapped situation in Section~\ref{sec:adia} and then for smooth crossings in Section~\ref{sec:WPC}.


\section{Propagation of  Gaussian states and the Herman--Kluk approximation for scalar equations}\label{sec:scalar}

In this section,  $N=1$ and the equation~\eqref{system} is associated with a scalar Hamiltonian $H(t)= h(t)$ of subsquadratic growth \eqref{hyp:H}. In that case, the approximate propagation of Gaussian states is described by classical quantities, leading to a simple Herman--Kluk approximation.   We set
\[
J = \begin{pmatrix}0 & {\1}_{\R^d}\\ -{\1}_{\R^d} & 0\end{pmatrix}
\]
and for $z_0\in\R^{2d}$ we consider the classical Hamiltonian 
trajectory $z(t) = (q(t),p(t))$ defined by the ordinary differential equation
\[
\dot z(t) = J \partial_z h(t,z(t)),\;\; z(t_0)=z_0.
\]
 The associated flow map is then denoted by
\begin{equation}
\label{def:flot1}
\Phi_h^{t,t_0}(z_0) = z(t)=(q(t),p(t)),
\end{equation}
and the blocks of its Jacobian matrix 
$F(t,t_0,z_0) = \partial_{z}\Phi_h^{t,t_0}(z_0)$ 
by
\begin{align*}
F(t,t_0,z_0) = \begin{pmatrix}  A(t,t_0,z_0) &B(t,t_0,z_0) \\ C(t,t_0,z_0) &D(t,t_0,z_0)\end{pmatrix}.
\end{align*}
We note that the Jacobian satisfies the linearized flow equation
\begin{align}\label{eq:F}
\partial_t F(t,t_0,z_0) &= J {\rm Hess}_zh(t,z(t)) \, F(t,t_0,z_0),\;\;
F(t_0,t_0,z_0) = \1_{\R^{2d}}.
\end{align}
Thus, $F$ is a smooth in $t,t_0,z$ with any derivative in $z$ bounded. 
We will also use the action integral 
\begin{equation}
\label{def:S} 
\partial_t S(t,t_0,z_0) = p(t)\cdot \dot q(t)-h(t,z(t)),\;\; S(t_0,t_0,z_0) = 0.
\end{equation} 
Then, the Herman--Kluk approximation (also called {\it frozen Gaussian approximation} in the literature) of the unitary propagator ${\mathcal U}^\eps_h(t,t_0)$ writes as 
follows.

\begin{thm}[\cite{RS,R}]\label{thm:scalarhk}
Assuming \eqref{hyp:H}, the evolution through the scalar equation with  $N=1$ and $H=h\1_{\C} $ in~(\ref{system}) satisfies for every $J\geq 0$,
\[
{\mathcal U}^\eps_h(t,t_0) = {\mathcal I}^{\eps,J}_h(t,t_0) +  O(\eps^{J+1})
\]
in the norm of bounded operators on $L^2(\R^d)$, where the Herman--Kluk propagator is defined for 
all $\psi\in L^2(\R ^d)$,  
\beq\label{HK1}
{\mathcal I}_h^{\eps,J}(t,t_0)\psi = (2\pi\eps)^{-d} \int_{\R^{2d}} 
\langle g^\eps_z,\psi\rangle  u^{\eps,J}(t,t_0,z) {\rm e}^{\frac{i}{\eps}S(t,t_0,z)} g^\eps_{\Phi^{t,t_0}_h(z)} dz ,
\eeq
with 
$$
 u^{\eps,J}(t,t_0,z)=\sum_{0\leq j\leq J}\eps^j u_j(t,t_0,z),
$$
where every $u_j$ is smooth in $t, t_0,z$, with any derivative in $z$  bounded. The function $u_0$ is the Herman--Kluk prefactor,
\begin{equation}\label{def:HKprefactor}
u_0(t,t_0,z)= 2^{-d/2}   \ {\rm det}^{1/2} \left( A(t,t_0,z)+D(t,t_0,z)+i(C(t,t_0,z)-B(t,t_0,z))     \right),
\end{equation}
which has the branch of the square root determined by continuity in time. 
\end{thm}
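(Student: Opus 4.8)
The plan is to treat the Herman--Kluk propagator $\mathcal{I}^{\eps,J}_h(t,t_0)$ as an approximate solution of~\eqref{system} and to close the argument with a Duhamel (Dyson) estimate. First I would check the initial condition: at $t=t_0$ the flow $\Phi_h^{t_0,t_0}$ is the identity, $S(t_0,t_0,z)=0$ by~\eqref{def:S}, and from~\eqref{eq:F} one has $A=D=\1_{\R^d}$, $B=C=0$, so the prefactor~\eqref{def:HKprefactor} is $u_0=2^{-d/2}\det^{1/2}(2\1_{\R^d})=1$; hence~\eqref{HK1} collapses to the reconstruction formula~\eqref{reconstruction} and $\mathcal{I}^{\eps,J}_h(t_0,t_0)=\mathrm{Id}$. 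Writing $\widehat E^{\eps,J}(s):=(i\eps\partial_s-\widehat h)\,\mathcal{I}^{\eps,J}_h(s,t_0)$ and using that $\mathcal{U}^\eps_h(t,s)$ is unitary, the Duhamel formula gives
\[
\mathcal{I}^{\eps,J}_h(t,t_0)-\mathcal{U}^\eps_h(t,t_0)=\frac{1}{i\eps}\int_{t_0}^{t}\mathcal{U}^\eps_h(t,s)\,\widehat E^{\eps,J}(s)\,ds ,
\]
so it suffices to construct $u_0,\dots,u_J$ so that $\|\widehat E^{\eps,J}(s)\|_{L^2\to L^2}=O(\eps^{J+2})$ uniformly on compact time intervals; the $1/\eps$ loss then yields the claimed $O(\eps^{J+1})$.

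\textbf{Wave-packet calculus and matching of powers of $\sqrt\eps$.} Since $\mathcal{I}^{\eps,J}_h$ is a superposition over $z$ of the propagated frozen Gaussians $v^\eps(s,z):=u^{\eps,J}(s,t_0,z)\,{\rm e}^{\frac i\eps S(s,t_0,z)}g^\eps_{\Phi_h^{s,t_0}(z)}$, I would compute $(i\eps\partial_s-\widehat h)v^\eps$ for fixed $z$ and Taylor expand the symbol $h(s,\cdot)$ about the moving centre $z(s)=\Phi_h^{s,t_0}(z)$. Because each factor $(x-q(s))$ and each $(\tfrac{\eps}{i}\partial_x-p(s))$ gains an $O(\sqrt\eps)$ on $g^\eps_{z(s)}$ (a direct computation from~\eqref{def:gz}), this organises the residual into powers of $\sqrt\eps$, while the time derivative hits the phase, the Gaussian centre and the amplitude. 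At order $\eps^0$ the scalar terms cancel exactly by the definition $\partial_t S=p\cdot\dot q-h$ in~\eqref{def:S}; at order $\eps^{1/2}$ the terms linear in $(x-q(s))$ and $(\tfrac{\eps}{i}\partial_x-p(s))$ cancel precisely because $z(s)$ solves Hamilton's equations $\dot q=\partial_p h$, $\dot p=-\partial_q h$. Thus the classical data built into~\eqref{def:flot1}--\eqref{def:S} are exactly what kills the two leading orders.

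\textbf{Transport equations and the determinant prefactor.} The order-$\eps$ part is the heart of the matter. Quantizing the quadratic Taylor term $\tfrac12 (w-z(s))^{\top}\mathrm{Hess}_z h(s,z(s))(w-z(s))$, in the phase-space variable $w=(x,\xi)$, and applying it to $g^\eps_{z(s)}$ produces a Gaussian times Hermite polynomials of degree $0$ and $2$. The degree-$0$ (pure Gaussian) contribution, together with $i\eps\partial_s$ acting on $u_0$, yields a scalar linear transport ODE for $u_0$ along the trajectory; solving it by Liouville's formula and the linearised flow equation~\eqref{eq:F} gives precisely $u_0=2^{-d/2}\det^{1/2}(A+D+i(C-B))$, with the branch fixed by continuity from $u_0(t_0,t_0,z)=1$. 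The genuinely quadratic (degree-$2$) remainder cannot be absorbed pointwise, since the frozen Gaussian does not spread; instead I would rewrite these monomials as second-order $z$-derivatives of $g^\eps_z$ (using the elementary relations between multiplication/differentiation on $g^\eps_z$ and $\partial_q,\partial_p$) and integrate by parts in $z$ inside the integral~\eqref{HK1}. This turns them into lower-order contributions that source the transport equations successively defining $u_1,\dots,u_J$; each $u_j$ is smooth with bounded $z$-derivatives because $\Phi_h^{t,t_0}$, $F$ and $S$ are, by~\eqref{hyp:H} and~\eqref{eq:F}. After $J+1$ steps the integrand residual is genuinely $O(\eps^{J+2})$.

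\textbf{From an integrand estimate to an operator-norm bound.} The remaining, and I expect principal, difficulty is to convert the pointwise-in-$z$ bound on the residual into a uniform operator-norm bound on $\widehat E^{\eps,J}(s)$. Here I would use that every operator of Herman--Kluk type $\psi\mapsto(2\pi\eps)^{-d}\int\langle g^\eps_z,\psi\rangle\,a(z)\,g^\eps_{\Phi(z)}\,dz$ factors through the wave-packet transform $z\mapsto\langle g^\eps_z,\psi\rangle$, which is an isometry of $L^2(\R^d)$ into $L^2(\R^{2d})$ up to the constant in~\eqref{reconstruction}, followed by synthesis against the almost-orthogonal family $\{g^\eps_{\Phi(z)}\}$. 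A Schur test on the overlaps $\langle g^\eps_{\Phi(z)},g^\eps_{\Phi(z')}\rangle$, which decay like ${\rm e}^{-c|z-z'|^2/\eps}$ since $\Phi=\Phi_h^{s,t_0}$ is a bi-Lipschitz symplectomorphism, then bounds the operator norm by a constant times $\sup_z|a(z)|$, uniformly in $\eps$. Applied to $\widehat E^{\eps,J}(s)$, whose amplitude is $O(\eps^{J+2})$ with all $z$-derivatives controlled, this gives $\|\widehat E^{\eps,J}(s)\|_{L^2\to L^2}=O(\eps^{J+2})$, and the Duhamel identity above closes the argument. The main obstacle is exactly this uniform control of oscillatory phase-space integral operators; by comparison the $\sqrt\eps$-expansion and the recursion for the $u_j$ are lengthy but routine once the two leading cancellations and the determinant transport equation are in place.
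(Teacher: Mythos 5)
Your overall architecture --- Duhamel reduction, a $\sqrt\eps$-graded Taylor expansion about the moving centre, cancellation of the first two orders by the action \eqref{def:S} and Hamilton's equations, integration by parts in $z$ to generate the hierarchy $u_1,\dots,u_J$, and an almost-orthogonality/Schur bound to pass from an integrand estimate to an operator-norm estimate --- is a legitimate route; it is essentially the direct Fourier-integral-operator proof of \cite{RS} (see also \cite{Kay2}). It is genuinely different from the proof sketched in Section~\ref{sec:proofs}, which never manipulates the frozen ansatz directly: there one first propagates \emph{thawed} Gaussians $g^{\Gamma(t,t_0,z),\eps}_{\Phi^{t,t_0}_h(z)}$ with the evolving width \eqref{def:Gamma}, for which the quadratic Taylor term is absorbed exactly and no integration by parts is needed at leading order; one controls the superposed residual by a Schur test on its Bargmann kernel; and only then deforms the thawed superposition into the frozen one along the homotopy ${\mathcal G}(z,s)=(1-s)\Gamma(z)+is\,{\rm Id}$, the prefactor \eqref{def:HKprefactor} emerging from a transport ODE in the deformation parameter $s$. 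The thawed route is the one that extends to systems; yours is more self-contained in the scalar case.

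There is, however, one concrete error in your third step. The transport equation for $u_0$ is \emph{not} obtained from the degree-zero part of the Weyl-quantized Hessian term together with $i\eps\partial_s u_0$ alone. Writing $\Xi(z)=\overline{g^\eps_z(y)}\,{\rm e}^{\frac{i}{\eps}S(z)}g^\eps_{\Phi(z)}(x)$, the frozen analogue of the identity used in the paper's deformation argument reads $(x-\Phi_q(z))_j\,\Xi=\tfrac{\eps}{i}(M^{-T})_{jk}(i\partial_{q_k}+\partial_{p_k})\Xi$ with $M=A+D+i(C-B)$ (the correction $f$ vanishes because the frozen width is $z$-independent). Integrating the degree-two monomials by parts therefore produces, besides the genuinely lower-order terms you describe, a degree-\emph{zero} term carrying exactly one explicit power of $\eps$, namely the one where the $z$-derivative falls on the remaining factor $(x-\Phi_q(z))_\ell$ and yields $-(iA+B)_{\ell k}$. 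This contribution sits at the same order as $i\eps\partial_s u_0$ and must enter the $u_0$ equation; it is precisely the ${\rm tr}\bigl(M^{-1}\partial_s M\bigr)$-type trace that makes all four blocks $A,B,C,D$ appear in \eqref{def:HKprefactor}. If you keep only the degree-zero Weyl contribution, Liouville's formula gives the wrong prefactor: for $h=|\xi|^2/2$ one finds $|u_0|\equiv1$ instead of $u_0=(1-it/2)^{d/2}$. The remainder of your sketch (initial condition, the two leading cancellations, the $\eps^{\lceil|\alpha|/2\rceil}$ gain of \eqref{lem:IPP}, and the Schur test) is sound, and the gap is repairable within your own framework by simply bookkeeping the integration by parts at order $\eps$ correctly.
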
 

\medskip

Let us remark that the Gaussian wave packets  in \eqref{HK1} all have the  same covariance matrix $\Gamma = \1_{C^d}$, that is, in the terminology put forward by E. Heller \cite{H}, the Gaussians are frozen.  
The first statement of the form \eqref{HK1} at leading order ($J=0$) is due to M.~Herman and E. Kluk (1984)\cite{HK}. Then, 
W.H. Miller (2002) \cite{M} noticed that \eqref{HK1} can be deduced from the Van Vleck approximation of the propagator (hence related with a Feynman integral). In more recent work \cite{RS, R}, \eqref{HK1} was established  
using Fourier-Integral operator techniques.
Here we shall comment in more details on a proof  based on the propagation of  Gaussian wave packets, via a thawed Gaussian approximation (see~Section~\ref{sec:proofs}). This proof has the advantage that it can be  used in the case of systems of Schr\"odinger equations, which were not treated in the preceding references.

\medskip 

 A Gaussian wave packet is a wave packet with profile function belonging to the class of Gaussian states with variance taken in the Siegel set ${\mathfrak S}^ +(d)$ of  $d\times d$ complex-valued symmetric matrices with positive imaginary part,
\[
{\mathfrak S}^+(d) = \left\{\Gamma\in\C^{d\times d},\  \Gamma=\Gamma^\tau,\ \Im\Gamma >0\right\}.
\]
With  $\Gamma\in{\mathfrak S}^+(d)$ and $z\in\R$, we associate the Gaussian state
 \[
g_z^{\Gamma,\eps}(x) = c_\Gamma\, (\pi\eps)^{-d/4}\, \exp\left(\frac{i}{\eps} p\cdot(x-q)+\frac{i}{2\eps}\Gamma (x-q)\cdot (x-q)\right),
\]
where 
$c_\Gamma={\rm det}^{1/4}(\Im\Gamma)$  
is a positive normalization constant in $L^2(\R^d)$. We note that the standardized Gaussian defined in 
\eqref{def:gz} satisfies $g_z^\eps = g_z^{i\Id,\eps}$.
%

\medskip

We shall use the notation ${\rm WP}^\eps_z(\varphi)$ to denote the bounded family in $L^2(\R^d)$ associated with $\varphi\in\mathcal S(\R^d)$ and $z=(q,p)\in\R^{2d}$ by 
 \begin{equation}\label{def:WP}
 {\rm WP}^\eps_z(\varphi)(x)= \eps^{-d/4}  {\rm e}^{\frac{i}{\eps} p\cdot(x-q)} \varphi \left( \frac {x-q}{\sqrt\eps} \right).
 \end{equation}
 With wave packet notation, the above Gaussian states satisfy $g_z^{\Gamma,\eps} = {\rm WP}^\eps_z(g_0^{\Gamma,1})$.

\begin{thm}[\cite{CR,Rcimpa}]\label{thm:propag1} Assuming \eqref{hyp:H} with $N=1$ and $H=h\1_\C$, there exists a family of time-dependent Schwartz functions $(\varphi_j(t))_{j\in\N}$ 
such that for all $N_0\in\N$, in $L^2(\R^d)$, 
\begin{equation}\label{eq:structure}
{\mathcal U}^\eps_h(t,t_0) g^{\Gamma_0,\eps}_{z_0}= {\rm e}^{\frac i\eps S(t,t_0,z_0)} \left(g^{\Gamma(t,t_0,z_0),\eps}_{\Phi^{t,t_0}(z_0)} +\sum_{j=1}^{N_0}\eps^{j/2} 
{\rm WP}^\eps_{\Phi^{t,t_0}(z_0)}(\varphi_j(t))\right) + O(\eps^{(N_0+1)/2} )
\end{equation}
 with 
\begin{equation}\label{def:Gamma}
\Gamma(t,t_0,z_0) 
= (C(t,t_0,z_0)+ D(t,t_0,z_0)\Gamma_0)(A(t,t_0,z_0) +B(t,t_0,z_0)\Gamma_0)^{-1}
\end{equation}
and $\displaystyle{ \;\;
c_{\Gamma(t,t_0,z_0)} 
= c_{\Gamma_0}\, {\rm det}^{-1/2}(A(t,t_0,z_0)+B(t,t_0,z_0)\Gamma_0),
}$ where the complex square root is continuous in time.
\end{thm}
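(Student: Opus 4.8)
The plan is to reduce the propagation problem to a normal form centered on the classical trajectory and to build the asymptotic expansion order by order, controlling the remainder through the unitarity of $\mathcal{U}^\eps_h(t,t_0)$ together with Duhamel's formula. First I would write the candidate solution as $\psi^\eps_{\rm app}(t,x) = {\rm e}^{\frac i\eps S(t,t_0,z_0)}\,{\rm WP}^\eps_{\Phi^{t,t_0}(z_0)}\!\big(v(t)\big)(x)$ with $v(t) = \sum_{j=0}^{N_0}\eps^{j/2}\varphi_j(t)$ and $\varphi_0(t) = g_0^{\Gamma(t,t_0,z_0),1}$, and insert it into $(i\eps\partial_t - \widehat{h})\psi^\eps_{\rm app}$. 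Conjugating $\widehat h$ by the phase-space translation sending $0$ to $\Phi^{t,t_0}(z_0)$ and rescaling $x\mapsto (x-q(t))/\sqrt\eps$ turns the symbol into $h(t,z(t)+\sqrt\eps\,\zeta)$; Taylor expanding in powers of $\sqrt\eps$ produces a scalar term $h(t,z(t))$, a linear term $\partial_z h(t,z(t))\cdot\zeta$, a quadratic term $\tfrac12\,{\rm Hess}_z h(t,z(t))\,\zeta\cdot\zeta$, and higher-order terms. Choosing $z(t)=\Phi^{t,t_0}(z_0)$ annihilates the linear term while the definition \eqref{def:S} of $S$ absorbs the scalar term, leaving, after division by $\eps$, a transport equation $i\partial_t v = \widehat{H_2}(t)\,v + \sum_{k\ge 3}\eps^{(k-2)/2}\widehat{H_k}(t)\,v$, where $\widehat{H_2}(t)$ is the $\eps=1$ Weyl quantization of $\zeta\mapsto\tfrac12\,{\rm Hess}_z h(t,z(t))\,\zeta\cdot\zeta$ and the $\widehat{H_k}(t)$ quantize the degree-$k$ Taylor coefficients.

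The leading term is governed by the metaplectic evolution $R(t)$ generated by $\widehat{H_2}(t)$, which maps Gaussians to Gaussians. Encoding the variance as $\Gamma(t)$, the pair $(\delta q,\delta p)$ with $\delta p=\Gamma\,\delta q$ is transported by the linearized flow \eqref{eq:F}, so $\delta q(t) = (A + B\Gamma_0)\,\delta q_0$ and $\delta p(t) = (C + D\Gamma_0)\,\delta q_0$; this integrates the associated Riccati equation to the fractional-linear formula \eqref{def:Gamma}. That $A + B\Gamma_0$ is invertible and that $\Gamma(t)\in\mathfrak{S}^+(d)$ is the standard fact that the symplectic group acts on the Siegel upper half-space, and it guarantees that the square root entering $c_{\Gamma(t)}$ can be continued in time. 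The prefactor is then the complex scalar arising in $R(t)\,g_0^{\Gamma_0,1} = c_{\Gamma(t)}\,(\pi)^{-d/4}{\rm e}^{\frac i2\Gamma(t)\,x\cdot x}$; using the Siegel transformation law $\Im\Gamma(t) = (A+B\Gamma_0)^{-*}\,\Im\Gamma_0\,(A+B\Gamma_0)^{-1}$ one checks $|c_{\Gamma(t)}| = {\rm det}^{1/4}(\Im\Gamma(t))$, so that its modulus restores the $L^2$ normalization and its argument is the Maslov-type phase, yielding $c_{\Gamma(t)} = c_{\Gamma_0}\,{\rm det}^{-1/2}(A + B\Gamma_0)$ with the branch fixed by continuity. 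Hence the leading term is exactly $\varphi_0(t)=g_0^{\Gamma(t),1}$ with this complex-valued $c_{\Gamma(t)}$.

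Next I would solve the hierarchy for the corrections. Matching powers of $\eps^{1/2}$ gives, for each $j\ge 1$, an inhomogeneous equation $i\partial_t\varphi_j = \widehat{H_2}(t)\,\varphi_j + b_j(t)$ with $\varphi_j(t_0)=0$, where $b_j(t)=\sum_{\ell=0}^{j-1}\widehat{H_{j-\ell+2}}(t)\,\varphi_\ell(t)$ is a finite combination of the operators $\widehat{H_k}(t)$, $k\ge 3$, applied to the previously constructed $\varphi_0,\dots,\varphi_{j-1}$. Since each $\widehat{H_k}(t)$ has polynomial symbol and the flow of $\widehat{H_2}(t)$ preserves the Schwartz class, Duhamel's formula produces a unique Schwartz solution $\varphi_j(t)$, smooth in $(t,t_0,z_0)$; one verifies inductively that each $\varphi_j(t)$ is a polynomial multiple of the leading Gaussian, with polynomial degree growing in $j$.

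The final and most delicate step is the remainder estimate. By construction the residual $r^\eps(t) := (i\eps\partial_t - \widehat h)\,\psi^\eps_{\rm app}(t)$ collects only the Taylor terms beyond order $N_0$, so, the wave-packet transform being an $L^2$-isometry, it is $O(\eps^{(N_0+3)/2})$ in $L^2$; here the subquadratic bound \eqref{hyp:H} is exactly what dominates the Taylor remainder of $h$ acting on the rapidly decaying $\varphi_j$ and keeps the estimate uniform on compact time intervals. Writing the Duhamel identity $\psi^\eps(t) - \psi^\eps_{\rm app}(t) = -\tfrac i\eps\int_{t_0}^t \mathcal{U}^\eps_h(t,s)\,r^\eps(s)\,ds$ and invoking the unitarity of $\mathcal{U}^\eps_h$ gives $\|\psi^\eps(t)-\psi^\eps_{\rm app}(t)\|_{L^2}\le \eps^{-1}\int_{t_0}^t\|r^\eps(s)\|_{L^2}\,ds = O(\eps^{(N_0+1)/2})$, which is the bound \eqref{eq:structure}. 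I expect the bookkeeping of the polynomial growth of the $\varphi_j$ — ensuring that the Schwartz seminorms entering $r^\eps$ stay bounded uniformly in $\eps$ — to be the main technical obstacle, the remaining ingredients being the algebraic identities for the quadratic flow.
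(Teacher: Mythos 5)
Your proposal is correct and follows essentially the same route as the proof the paper relies on (the coherent-state propagation argument of \cite{CR,Rcimpa}, whose mechanism is also sketched in Section~\ref{sec:proofs} via the ``thawed remainder''): conjugation to the moving frame, Taylor expansion of $h$ along the classical trajectory with the linear term killed by the flow and the scalar term absorbed by the action, metaplectic/Riccati evolution of $\Gamma$ giving \eqref{def:Gamma}, a Duhamel hierarchy for the Schwartz corrections $\varphi_j$, and a remainder bound of order $\eps^{(N_0+3)/2}$ controlled by the subquadratic hypothesis \eqref{hyp:H} and unitarity. No substantive gap.
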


Note that we have $\Gamma(t_0,t_0,z_0)=\Gamma_0$ in the statement above. 
Actually, explicit information is obtained on the profiles $(\varphi_j(t))_{j\in\N}$, in particular about $\varphi_1(t)$ (see~\cite[Section~4.1.2]{CR} or Proposition~2.3 in~\cite{FLR1}). As we shall see in Section~\ref{sec:proofs}, this result can be 
a starting point for proving the Herman--Kluk approximation of Theorem~\ref{thm:scalarhk}. 

\medskip 

The result above also holds for general wave packets as in~\eqref{def:WP} with profiles that are not necessarily Gaussians (see~\cite [Section~4.1.2]{CR}). Moreover, also the norm can be generalized to 
$$
\|f\|_{\Sigma^\eps_k} = \sup_{|\alpha|+|\beta| \leq k}\| x^\alpha (\eps \partial_x)^\beta f\|_{L^2},\quad k\in\N.
$$

\begin{exa} \label{clas:TM}
Both Theorem~\ref{thm:scalarhk} and \ref{thm:propag1} are exact  when the Hamiltonian~$h$ is quadratic. For the Hamiltonians $h_\pm(z) = p\pm kq$ associated with the toy model (see~\eqref{eigenvalues:TM}), the classical flow is linear 
$$\Phi_{h_\pm}^{t,t_0}(z_0) =:(q_\pm(t), p_\pm(t))=  z_0 + (t-t_0) (1,\mp k),\;\; z_0=(q_0,p_0),$$
the width of the Gaussian is constant, $\Gamma_\pm(t,t_0,z_0)=\Gamma_0$, and the actions only depend on $q_0$ and are given by 
$\displaystyle{S_\pm(t,t_0,q_0)= \mp k q_0(t-t_0)\mp \frac{k}2(t-t_0)^2.}$
\end{exa}


\section{Herman--Kluk formula in the adiabatic setting}\label{sec:adia}

We need adaptions for generalizing the scalar ideas to systems.
 The first one replaces the scalar Herman--Kluk prefactor $u_0(t,t_0,z_0)$ by a vector-valued one $\vec U(t,t_0,z_0)$ that is expanded in a basis of eigenvectors of $H(t,z)$, taken along the classical trajectory $\Phi^{t,t_0}_h(z)$ of the corresponding eigenvalue (denoted here by $h(t,z)$). This is done by parallel transport, and it is sufficient for an order $\eps$ approximation  as long as the system is gapped. 

\subsection{Parallel transport} 
 
 The following construction generalizes \cite[Proposition~1.9]{CF11}, which was inspired by the work of
G.~Hagedorn, see \cite[Proposition~3.1]{Hag94}. The details are given in \cite{FLR1}. In the sequel, we denote the complementary orthogonal projector 
by 
$\Pi^\perp(t,z) = \1_{\C^N}-\Pi(t,z)$ 
and assume that 
\begin{equation}\label{eq:decompose}
H(t,z) = h(t,z)\Pi(t,z) + h^\perp(t,z)\Pi^\perp(t,z)
\end{equation}
with the second eigenvalue given by 
$h^\perp(t,z) = {\rm tr}(H(t,z)) - h(t,z).$
We introduce the auxiliary matrices 
\begin{align}\nonumber
\Omega(t,z) &=-\tfrac12\big(h(t,z)-h^\perp(t,z)\big)\Pi(t,z)\{\Pi,\Pi\}(t,z)\Pi(t,z) ,\\*[1ex]
\nonumber
K(t,z) &= \Pi^\perp(t,z)\left(\partial_t\Pi(t,z)+\{h,\Pi\}(t,z)\right)\Pi(t,z),\\*[1ex]
\nonumber
\Theta(t,z) &= i\Omega(t,z) + i(K-K^*)(t,z),
\end{align}
that are smooth and satisfy some  algebraic properties. In particular, 
$\Omega$ is skew-symmetric and $\Theta$ is self-adjoint:
$\Omega = -\Omega^*$ and $\Theta = \Theta^*$.

\begin{prop}[\cite{FLR1}] \label{prop:eigenvector}
Let $H(t,z)$ be a smooth Hamiltonian that satisfies~\eqref{hyp:H} and has a smooth spectral decomposition~\eqref{eq:decompose}. We assume that both eigenvalues are of subquadratic growth as well. We consider $\vec V_0\in{\mathcal C}_0^\infty(\R^{2d},\C^N)$ and $z_0\in \R^{2d}$ such that there exits a neighborhood~$U$ of $z_0$ such that for all $z\in U$
$$\vec V_0 (z)=\Pi(t_0,z)\vec V_0(z)\quad\text{and}\quad \|\vec V_0(z)\|_{\C^N} = 1.$$
Then, 
  there exists a smooth normalized vector-valued function $\vec V(t,t_0)$ satisfying 
  $$\vec V(t,t_0,z)= \Pi(t,z)\vec V(t,t_0,z)\quad\text{for all}\quad z\in \Phi_h^{t,t_0}(U),$$
   such that for all $t\in\R$ and $z\in\Phi_h^{t,t_0}(U)$,
\begin{equation}\label{eq:eigenvector}
\partial_t \vec V(t,t_0,z) + \{h, \vec V\} (t,t_0,z) = -i\Theta(t,z)\vec V (t,t_0,z),\;\; \vec V(t_0,t_0,z) = \vec V_0(z).
\end{equation} 
\end{prop}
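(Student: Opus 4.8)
The plan is to recognize the transport operator $\partial_t+\{h,\cdot\}$ in~\eqref{eq:eigenvector} as the total time derivative along the Hamiltonian trajectories of $h$, which turns the first-order transport PDE into a linear ODE on $\C^N$ carried by the flow. Fix $z_0\in\R^{2d}$ and set $z(t)=\Phi_h^{t,t_0}(z_0)$. Since $\dot z=J\partial_z h$, for any smooth $f$ one has $\frac{d}{dt}f(t,z(t))=(\partial_t f+\{h,f\})(t,z(t))$; hence, writing $\vec W(t)=\vec V(t,t_0,z(t))$ (suppressing the dependence on $z_0$), equation~\eqref{eq:eigenvector} is equivalent along each trajectory to the linear Cauchy problem
\[
\frac{d}{dt}\vec W(t)=-i\,\Theta(t,z(t))\,\vec W(t),\qquad \vec W(t_0)=\vec V_0(z_0).
\]
First I would solve this ODE for every $z_0\in\R^{2d}$ and define $\vec V(t,t_0,z)$ to be the resulting solution evaluated at $z_0=(\Phi_h^{t,t_0})^{-1}(z)$, which is licit because $\Phi_h^{t,t_0}$ is a diffeomorphism; by construction $\vec V$ then solves~\eqref{eq:eigenvector} with datum $\vec V_0$.

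Existence, uniqueness and smoothness of $\vec V$ follow from standard ODE theory: $\Theta$ is smooth, and $\Phi_h^{t,t_0}$ is smooth with all $z$-derivatives bounded (cf.~\eqref{eq:F}), so the solution depends smoothly on the parameter $z_0$, hence $\vec V$ is smooth on $\Phi_h^{t,t_0}(U)$; if one also wants bounded $z$-derivatives, these come from differentiating the ODE and applying a Gronwall estimate to the variational equations, using the subquadratic bounds on $h$ and $h^\perp$. Normalization is immediate from the self-adjointness $\Theta=\Theta^*$: the generator $-i\Theta$ is skew-Hermitian, so the propagator it induces on $\C^N$ is unitary and $\|\vec W(t)\|=\|\vec V_0(z_0)\|=1$ for $z_0\in U$.

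The key step is the invariance of the eigenspace, i.e.\ $\Pi\vec V=\vec V$. Abbreviate $\Pi=\Pi(t,z(t))$ and $R=(\partial_t\Pi+\{h,\Pi\})(t,z(t))=\tfrac{d}{dt}\Pi(t,z(t))$, and differentiate $\vec W^\perp(t):=\Pi^\perp\vec W(t)$ along the trajectory. Using $\tfrac{d}{dt}\Pi^\perp=-R$ and $-i\Theta=\Omega+K-K^*$, the three structural identities $\Pi^\perp\Omega=0$ (since $\Omega=-\tfrac12(h-h^\perp)\Pi\{\Pi,\Pi\}\Pi$ carries $\Pi$ on the left), $\Pi^\perp K=K$ and $\Pi^\perp K^*=0$ (from $K=\Pi^\perp R\,\Pi$, $K^*=\Pi R\,\Pi^\perp$, and idempotency of $\Pi^\perp$) reduce the computation to $\frac{d}{dt}\vec W^\perp=-R\vec W+\Pi^\perp R\,\Pi\vec W$. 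Splitting $\vec W=\Pi\vec W+\vec W^\perp$, the projector identity $\Pi R\,\Pi=\Pi(\tfrac{d}{dt}\Pi)\Pi=0$ (obtained by sandwiching $\dot\Pi\Pi+\Pi\dot\Pi=\dot\Pi$ with $\Pi$) cancels the $\Pi\vec W$ contribution, while $\Pi^\perp R\,\Pi$ annihilates $\vec W^\perp$, leaving the closed homogeneous equation $\frac{d}{dt}\vec W^\perp=-R\,\vec W^\perp$. Since $\vec W^\perp(t_0)=\Pi^\perp(t_0,z_0)\vec V_0(z_0)=0$ on $U$, uniqueness forces $\vec W^\perp\equiv0$, that is $\vec V(t,t_0,z)=\Pi(t,z)\vec V(t,t_0,z)$ on $\Phi_h^{t,t_0}(U)$.

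The main obstacle is precisely this last algebraic step: checking that the off-diagonal part $K-K^*$ of $\Theta$ is tuned so that $\vec W^\perp$ decouples into an autonomous linear equation. The skew-symmetric term $\Omega$ is irrelevant here, being a within-eigenspace, self-adjoint correction that only fixes the geometric phase (and the higher-order accuracy of the later approximation); the crux is the projector bookkeeping showing that every term sending $\vec W$ into, or the parallel part toward, the orthogonal complement either vanishes or assembles into $-R\vec W^\perp$.
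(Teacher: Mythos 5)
Your argument is correct and is essentially the standard one that the paper defers to \cite{FLR1} for: reduce the transport operator $\partial_t+\{h,\cdot\}$ to a linear ODE along the Hamiltonian characteristics, get normalization from $\Theta=\Theta^*$, and get $\Pi\vec V=\vec V$ from the projector identities $\Pi^\perp\Omega=0$, $\Pi^\perp K=K$, $\Pi^\perp K^*=0$ and $\Pi\dot\Pi\Pi=0$, which close the equation for $\Pi^\perp\vec W$ and force it to vanish by uniqueness. The only blemish is the closing aside calling $\Omega$ a ``self-adjoint correction'' (it is skew-Hermitian; $i\Omega$ is the self-adjoint contribution to $\Theta$), which does not affect the proof.
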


We note that the results of this  Proposition are valid as long as smooth eigenprojectors and eigenvalues do exist. It does not require an explicit  adiabatic situation and we will use  that observation in Section~\ref{sec:WPC}. 
In the case of Schr\"odinger systems, one refers to \eqref{eq:eigenvector} as parallel transport because the vectors 
$\partial_t \vec V(t) + \{h, \vec V\}(t)$ belong to the range of $\Pi^\perp(t)$ at any time of the evolution.

\begin{exa}\label{pt:TM} For the toy model $H_{k,\theta}(x,\xi)$, the auxiliary matrices are
\[
\Omega_\pm = 0,\quad K_\pm = \frac{i\theta}{4}\begin{pmatrix}1 & \pm {\rm e}^{i\theta x}\\ 
\mp {\rm e}^{-i\theta x} & -1\end{pmatrix},\quad \Theta_\pm = \frac\theta2\begin{pmatrix}-1 & 0\\ 0 & 1\end{pmatrix}.
\]
Initiating the parallel transport equation by the eigenvectors given in \eqref{eigenvalues:TM}, we obtain
\[
\vec V_\pm(t,t_0,q) = \frac{1}{\sqrt2}
\begin{pmatrix}{\rm e}^{i\theta/2(t-t_0)}\, {\rm e}^{i\theta(q-t+t_0)}\\ \pm{\rm e}^{-i\theta/2(t-t_0)}\end{pmatrix}
= \frac{1}{\sqrt2}
\begin{pmatrix}{\rm e}^{-\frac i 2\theta (t-t_0)}\, {\rm e}^{i\theta q}\\ \pm{\rm e}^{-\frac i2 \theta(t-t_0)}\end{pmatrix}.
\]
Note that they do not depend on~$p$.
\end{exa}

\subsection{Herman-Kluk approximation in an adiabatic setting }

In the context of the preceding section, and in presence of an eigenvalue gap,
\[
\exists\delta>0:\ {\rm dist}(h(t,z),h^\perp (t,z)\}) \geq \delta\ \text{for all}\ (t,z),
\]
it is well-known that one has {\it adiabatic decoupling}:
 If the initial data are 
scalar multiples of an eigenvector associated with the eigenvalue, then the solution keeps this property to leading 
order in $\eps$. For an approximation to higher order in $\eps$, one has to consider perturbations of the eigenprojector, the so-called {\it superadiabatic projectors} (see~\cite{bi,Te} and the new edition of~\cite{CR} that should appear soon). Adiabatic theory implies a  Herman--Kluk approximation of the propagator that we state next.

\begin{thm}\label{thm:decoupling}\cite{FLR1}
In the situation of~Proposition~\ref{prop:eigenvector},  
we assume the existence of an eigenvalue gap for the eigenvalue~$h$ of $H$ and 
consider initial data of the form 
\[
\psi^\eps_0= \widehat {\vec V_0} v^\eps_0 +O(\eps)\ \text{in}\  L^2(\R^d),
\] 
where $\vec V_0$ is a smooth eigenvector and 
$(v^\eps_0)_{\eps>0}$ a family of functions uniformly bounded in $L^2(\R^d,\C)$
Then, in $L^2(\R^d)$,
\[
\mathcal U^\eps_H(t,t_0)\psi^\eps_0 = (2\pi\eps)^{-d}  \int_{\R^{2d}}    
\langle g^\eps_z,v^\eps_0 \rangle\ \vec U(t,t_0,z) \ {\rm e}^{\frac{i}{\eps}S(t,t_0,z)} \ 
 g^\eps_{\Phi_h^{t,t_0}(z)} \ dz + O(\eps)
\]
$$\mbox{with} \;\qquad\vec U (t,t_0,z)=u_0(t,t_0,z) {\vec V(t,t_0,\Phi_h^{t,t_0}(z) )}$$
where $u_0(t,t_0,z)$ is given by~\eqref{def:HKprefactor} and the eigenvector $\vec V(t,t_0,z)$ by~\eqref{eq:eigenvector}.
\end{thm}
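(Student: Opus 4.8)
The plan is to combine adiabatic decoupling with the scalar Herman--Kluk result (Theorem~\ref{thm:scalarhk}) and the parallel-transport construction (Proposition~\ref{prop:eigenvector}). The starting point is the reconstruction formula~\eqref{reconstruction} applied to $v^\eps_0$, so that
\[
\psi^\eps_0 = \widehat{\vec V_0}\,v^\eps_0 + O(\eps)
= (2\pi\eps)^{-d}\int_{\R^{2d}} \langle g^\eps_z, v^\eps_0\rangle\, \widehat{\vec V_0}\,g^\eps_z\, dz + O(\eps).
\]
Since $\vec V_0(z)=\Pi(t_0,z)\vec V_0(z)$ near each relevant center, the wave packet $\widehat{\vec V_0}\,g^\eps_z$ is, to leading order in $\eps$, a scalar Gaussian $g^\eps_z$ multiplied by the vector $\vec V_0(z)$ evaluated at the center $z$ (this is the standard symbol-times-wave-packet asymptotics, where $\widehat{\vec V_0}\,g^\eps_z = \vec V_0(z)\,g^\eps_z + O(\sqrt\eps)$ pointwise in $z$). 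Thus the propagator need only be understood on initial data that are scalar Gaussians aligned with the eigenvector, and the problem reduces to controlling $\mathcal U^\eps_H(t,t_0)\big(\vec V_0(z)\,g^\eps_z\big)$ uniformly in $z$.

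\medskip

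\emph{First I would} establish the single-wave-packet propagation statement: for a Gaussian $g^\eps_z$ carrying the eigenvector $\vec V_0(z)$, one has
\[
\mathcal U^\eps_H(t,t_0)\big(\vec V_0(z)\,g^\eps_z\big)
= {\rm e}^{\frac i\eps S(t,t_0,z)}\,\vec V(t,t_0,\Phi_h^{t,t_0}(z))\, g^\eps_{\Phi_h^{t,t_0}(z)} + O(\sqrt\eps),
\]
where $S$ is the scalar action~\eqref{def:S} for the eigenvalue $h$, $\Phi_h^{t,t_0}$ its classical flow, and $\vec V$ the parallel-transported eigenvector from Proposition~\ref{prop:eigenvector}. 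This is proved by plugging the ansatz on the right into the Schr\"odinger equation~\eqref{system} and matching orders of $\sqrt\eps$: the $\eps^0$ term fixes the scalar phase/transport (the eikonal and the scalar thawed-Gaussian dynamics of Theorem~\ref{thm:propag1}, here with frozen width since one only needs leading order), while the $\eps^{1/2}$ term, projected by $\Pi^\perp$, is precisely what the gap condition lets us invert. The off-diagonal coupling $\Pi^\perp H \Pi$ contributes a source that, thanks to the gap $\ge\delta$, can be solved away and contributes only at order $\eps$ to the component in the range of $\Pi$; the diagonal corrections assemble into the parallel-transport operator $-i\Theta$ in~\eqref{eq:eigenvector}. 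This is the step that genuinely uses the gap and that upgrades the scalar propagation to the vectorial one.

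\medskip

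\emph{Next I would} identify the scalar prefactor. Once the vector direction is handled by parallel transport, the amplitude of the Gaussian evolves exactly as in the scalar case driven by $h$, so invoking Theorem~\ref{thm:scalarhk} (equivalently the $J=0$ thawed-to-frozen reduction of Theorem~\ref{thm:propag1}) supplies the factor $u_0(t,t_0,z)$ of~\eqref{def:HKprefactor}, with its continuous-in-time branch of the square root. Combining, the single-packet leading term is $u_0(t,t_0,z)\,{\rm e}^{\frac i\eps S}\,\vec V(t,t_0,\Phi_h^{t,t_0}(z))\,g^\eps_{\Phi_h^{t,t_0}(z)}$, i.e.\ exactly $\vec U(t,t_0,z)\,{\rm e}^{\frac i\eps S}\,g^\eps_{\Phi_h^{t,t_0}(z)}$. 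Finally I would reassemble the integral over $z$, using that $\mathcal U^\eps_H(t,t_0)$ is unitary (hence bounded uniformly in $\eps$) together with the Cotlar--Stein / continuous-frame boundedness of the Herman--Kluk operator to convert the pointwise-in-$z$ single-packet error into an $O(\eps)$ operator-norm error on $L^2(\R^d)$.

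\medskip

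\emph{The hard part} is the last reassembly and the uniformity of the remainder. The single-packet estimate holds for each fixed center $z$, but one integrates over all $z\in\R^{2d}$ against the coefficients $\langle g^\eps_z, v^\eps_0\rangle$, so the error must be controlled uniformly in $z$ together with enough decay/derivative control to apply a Schur or Cotlar--Stein type argument to the resulting integral operator; loss of a power of $\eps$ from the symbol calculus must be balanced against the $(2\pi\eps)^{-d}$ normalization. Equivalently, one must show that the difference $\mathcal U^\eps_H(t,t_0)\widehat{\vec V_0} - \big(\text{Herman--Kluk operator}\big)$ is $O(\eps)$ as an operator, which is where the careful stationary-phase and oscillatory-integral bounds of~\cite{FLR1} enter. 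A secondary technical point is verifying that the leading-order replacement $\widehat{\vec V_0}\,g^\eps_z = \vec V_0(z)\,g^\eps_z+O(\sqrt\eps)$ and the projector localization are compatible with the frozen-Gaussian normalization, so that no hidden $\eps^{-1/2}$ is generated; this is routine but must be tracked to land exactly at the claimed $O(\eps)$ order.
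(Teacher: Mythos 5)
Your overall architecture (reconstruction formula, single-packet propagation aligned with the parallel-transported eigenvector, then reassembly by a Schur-type bound) matches the paper's, but there is a genuine gap in the accuracy bookkeeping that the argument as written cannot close. You propose to prove the single-packet statement
\[
\mathcal U^\eps_H(t,t_0)\bigl(\vec V_0(z)\,g^\eps_z\bigr)
= {\rm e}^{\frac i\eps S(t,t_0,z)}\,\vec V\bigl(t,t_0,\Phi_h^{t,t_0}(z)\bigr)\, g^\eps_{\Phi_h^{t,t_0}(z)} + O(\sqrt\eps),
\]
and then to ``convert the pointwise-in-$z$ single-packet error into an $O(\eps)$ operator-norm error'' by Schur/Cotlar--Stein. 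No such conversion is possible for a generic $O(\sqrt\eps)$ remainder: a Schur bound on the superposed error operator can at best preserve the size of the per-packet error, so your route yields $O(\sqrt\eps)$, not the claimed $O(\eps)$. The same problem affects your preliminary replacement $\widehat{\vec V_0}\,g^\eps_z = \vec V_0(z)\,g^\eps_z + O(\sqrt\eps)$, which you defer as a ``secondary technical point.'' The missing idea is structural: the $\sqrt\eps$-sized corrections are not arbitrary, they are of the form $\sqrt\eps\,\vec a(t,t_0,z)\cdot\frac{x-q(t)}{\sqrt\eps}$ times a Gaussian centered at $\Phi^{t,t_0}_h(z)$ (see~\eqref{exp:adiabGW}), i.e.\ a polynomial of odd degree one in $x-\Phi_q(z)$ times the propagated Gaussian. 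It is precisely the superposition over $z$ of such terms that gains an extra power of $\sqrt\eps$ by integration by parts in $z$, as quantified by the bound $\|\mathcal I\|\le C\eps^{\lceil|\alpha|/2\rceil}$ of~\eqref{lem:IPP} with $|\alpha|=1$. Without invoking this mechanism, the reassembly step fails.

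The paper's proof is organized to avoid this pitfall: the thawed ansatz $\psi^\eps_{{\rm th},z}(t)$ explicitly retains the first-order correction $\sqrt\eps\,\vec a\cdot\frac{x-q(t)}{\sqrt\eps}$, so that the residual of the evolution equation is $O(\eps^2)$ as in~\eqref{eq:thawed}; the energy estimate then reduces matters to an operator bound on the source $\Sigma^\eps_{\rm th}(t)$, which is obtained via the Bargmann-kernel Schur test~\eqref{eq:schur}; and only afterwards is the retained $\sqrt\eps$ correction discarded, using~\eqref{lem:IPP}, which shows its superposition is $O(\eps)$ in operator norm. To repair your proof you would need to (i) upgrade your single-packet statement to include the explicit $\sqrt\eps\,\vec a\cdot(x-q(t))/\sqrt\eps$ term so the PDE residual is $O(\eps^2)$, and (ii) replace the vague Cotlar--Stein appeal by the integration-by-parts estimate on superpositions of $(x-\Phi_q(z))^\alpha$-weighted Gaussians. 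A smaller issue: you invoke Theorem~\ref{thm:scalarhk} to ``supply'' the prefactor $u_0$, but that theorem is what one is generalizing; the prefactor actually arises from the thawed-to-frozen deformation argument (the interpolation ${\mathcal G}(z,s)=(1-s)\Gamma(z)+is\Id$ and the resulting ODE for $\vec W(z,s)$ solved by Liouville's formula), which must be carried out for the vector-valued amplitude as in the paper's final proposition.
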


We describe in Section~\ref{sec:proofs} a proof of this result that crucially uses
 the approximate evolution of a Gaussian state, that is~\cite[Section~3]{Rcimpa},  
 \begin{align}\label{exp:adiabGW}
\mathcal U^\eps_H(t,t_0) (\widehat {\vec V_0}g^{\Gamma,\eps}_z)= &\ {\rm e}^{\frac i\eps S(t,t_0,z)} \widehat {\vec V(t,t_0)} g^{\Gamma(t,t_0,z),\eps}_{\Phi^{t,t_0}_h(z)} 
\left(1+\sqrt\eps \, \vec a(t) \cdot \frac{(x-q(t))}{\sqrt\eps}\right) +O(\eps)
\end{align}
with $\vec a(t) = \vec a(t,t_0,z)$ a smooth and bounded map.  Note that using superadiabatic projectors~\cite{Rcimpa},  one can push the asymptotics further and exhibit $O(\eps)$ contributions that will have components on the other mode. 
Here again, a formula similar to~\eqref{exp:adiabGW} can be proved for general wave packets~\cite{CR}.

\subsection{Algorithmic  realization of the propagator}\label{sec:alg_adiab}

Numerical realizations of the Herman--Kluk approximation have been first developed in~\cite{HKD} 
and are still in practical use in theoretical chemistry~\cite{WCG}. 
Below, we follow the more recent account given in~\cite{LS,LL}. We stay with  the assumptions of Theorem~\ref{thm:decoupling} and 
consider initial data  associated with the (gapped) mode $h$.

\vskip 0.2cm

\noindent {\bf Step 1: Initial sampling}.
Choose a set of numerical quadrature points 
$$
z_1=(q_1,p_1),\ldots,z_N=(q_N,p_N)
$$ 
with associated weights $w_1,\ldots,w_N>0$, 
and evaluate the initial transform $\langle g_z^\eps, v^\eps_0\rangle$ in these points. This provides an approximation to the initial data, 
$$
\psi^\eps_0(x)\sim (2\pi\eps)^{-d} \sum_{1\leq j\leq N}  \langle g_{z_j}^\eps, v^\eps_0\rangle 
\vec V_0(z_j)  g_{z_j}^\eps(x) w_j,
$$
which is of order $\eps$ in $L^2(\R^d)$, as long as the chosen quadrature rule is sufficiently accurate.

\vskip 0.2cm

\noindent {\bf Step 2: Transport}.
For each of the points $z_j$, $j=1,\ldots,N$, compute 

\medskip
\noindent
(1) the classical trajectories $z_j(t)$ defined by \eqref{def:flot1},\\  
(2) the eigenvectors $\vec V(t,t_0,z_j(t))$ along the flow by use of equation~\eqref{eq:eigenvec},\\
(3) the Jacobian matrices $F(t,t_0,z_j)$ using equation~\eqref{eq:F},\\
(4) the action integrals $S(t,t_0,z_j)$ using equation~\eqref{def:S},\\
(5) the Herman-Kluk prefactor $u_0(t,t_0,z_j)$ using the Jacobians and equation~\eqref{def:HKprefactor}.\\*[1ex]
If the time-discretization is symplectic and sufficiently accurate, then the overall accuracy of order $\eps$ is 
not harmed, see \cite[Theorem~2]{LS}.

\vskip 0.2cm

\noindent {\bf Step 3: Conclusion}.
At the end of these thwo steps, we are left with the Hermann-Kluk quadrature formula:
$$\psi^\eps(t,x) \sim (2\pi\eps)^{-d} \sum_{1\leq j\leq N} \langle g_{z_j}^\eps, v^\eps_0\rangle \vec U(t,t_0,z) {\rm e}^{{i\over \eps} S(t,t_0,z_j)} g_{z_j(t)}^\eps w_j.$$

\medskip
Of course, the algorithm generalizes to initial data which has several components on separated eigenspaces. 
In higher dimensional applications, often Monte-Carlo quadrature is used. Then, the initial transform could be 
written as
$$(2\pi\eps)^{-d}\langle g_z^\eps,v^\eps_0\rangle dz = r^\eps_0(z) \mu^\eps_0(dz),$$
$$\mbox{where}\;\;\;\;\mu^\eps_0(dz)= \left( \int |\langle g_z^\eps, v^\eps_0\rangle | dz\right)^{-1}   |\langle g_z^\eps,v^\eps_0\rangle | dz$$
is a probability measure and $r^\eps_0(z)$ a complex-valued function of $L^1(\R^{2d},d\mu^\eps_0)$ 
according to the Radon--Nikodym Theorem. The quadrature nodes $z_1,\ldots,z_N$ are then chosen independently and identically distributed according to the measure $\mu^\eps_0$, while the weights are all the same, 
$w_j= 1/N$ for all $j$.


\section{What about smooth crossings ?}\label{sec:WPC}

For  simplicity we assume here that the Hermitian matrix $H(t,z)$ is $2\times 2$ ($N=2$) (the same results are also proved
for two eigenvalues with arbitrary multiplicity \cite{FLR1}). We assume that it has smooth eigenvalues $h_1(t,z)$ and $h_2(t,z)$ and smooth eigenprojectors $\Pi_1(t,z)$ and $\Pi_2(t,z)$. To ensure control on the derivatives of the eigenprojectors, 
we suppose a non crossing assumption at infinity: 
there exist  $c_0,n_0,r_0>0$ such that
\begin{equation*}
|h_1(t,z)-h_2(t,z)| \ge c_0 \langle z\rangle^{-n_0}\ \text{for all}\ (t,z)\ \text{with}\ |z|\ge r_0,
\end{equation*}
where we denote $\langle z\rangle = (1+|z|^2)^{1/2}$.
We assume the following form of the matrix:

\medskip 

\noindent {\bf Asumption (SC)}.
There exist scalar functions
  $v,f\in{\mathcal C}^\infty(\R^{2d+1}, \R)$ and a vector-valued function $u\in{\mathcal C}^\infty(\R^{2d+1}, \R^3)$ with  
$|u(t,z)| = 1$ for all $(t,z)$ such that 
$$H(t,z)=v(t,z){\rm Id} + f(t,z) \begin{pmatrix} u_1(t,z) & u_2 (t,z) +iu_3(t,z) \\u_2(t,z) -iu_3(t,z) &-u_1 (t,z) \end{pmatrix}.$$
Besides, the crossing is {\it generic} in $\Upsilon$ in the sense that 
$$(\partial_tf + \{v,f\} )(t^\flat,z^\flat) \neq 0,\;\;\forall (t^\flat,z^\flat)\in \Upsilon.$$
In that case, the crossing set $\Upsilon$ is a submanifold of codimension one, and all the classical trajectories that reach $\Upsilon$ are transverse to it. 

\begin{exa}
For the toy model~\eqref{ToMo}, $u(x) = (0,\cos(\theta x),\sin(\theta x))$, $v(\xi)=\xi$ and $f(\xi)=k\xi$.
 Hence we have Assumption~(SC) : 
 $\Upsilon =\{x=0\}$ and $\{v,f\}=k\not=0$.
\end{exa} 
 
In contrast to the previous adiabatic situation, initial data associated with one eigenspace generates
a component on the other eigenspace, that is larger than the adiabatic $O(\eps)$, namely $O(\sqrt\eps)$. 

Starting at time $t_0$ with a Gaussian wave packet, that is associated with the eigenvalue $h_1$ and localized far from the crossing set~$\Upsilon$, an approximation of the form~\eqref{exp:adiabGW} holds as long as the trajectory does not reach~$\Upsilon$. 
The apparition of a $\sqrt\eps$ contribution on the other mode then occurs exactly on~$\Upsilon$. 
One can interpret this phenomenon in terms of {\it hops}: starting at time $t_0$ from some point $z_0\notin\Upsilon$ 
for which the Hamiltonian trajectory  $z_1(t,t_0)= \Phi^{t,t_0}_1(z_0)$ passes through the crossing at time $t=t^\flat(z_0)$ and point $z^\flat=z_1(t^\flat,t_0)$,  we generate a new trajectory  $\Phi_2^{t,t^\flat} (z^\flat)$ associated with the mode~$h_2$. This results in the construction of a  {\it hopping trajectory} that hops from one mode to the other one at time $t^\flat$.

Such an interpretation is crucial for describing the dynamics of systems with eigenvalue crossings. It  has been introduced around the 70s in the chemical literature for avoided and conical crossings of eigenvalues 
and has been widely used since then (see \cite{Tu12}). 
The first mathematical results on the subject are more recent and analyse the propagation of 
Wigner functions through singular crossings (see~\cite{FL08,FL17}).

 We now aim at a precise description of these new contributions in the case of smooth crossing, first for initial data that are Gaussian wave packets, then we lift this result to a Herman--Kluk formula for smooth crossings in the case of the toy model.

 \subsection{Wave packet propagation}

Let us now give a precise statement for the propagation of wave packets through a smooth crossing. 
We start with initial data of the form 
\begin{equation*}
\psi^\eps_0= \widehat{\vec V_0} v^\eps_0,\;\; v^\eps_0 = g^{\Gamma_0, \eps}_{z_0}
 \end{equation*}
 with 
 $H(t_0,z)\vec V_0(z)=h_1(t_0,z) \vec V_0(z)$ in a neighborhood of $z_0$. 
We use Proposition~\ref{prop:eigenvector} to construct two families of time-dependent eigenvectors: $(\vec V_1(t,z))_{t\geq t_0}$  is associated with the eigenvalue $h_1(t,z)$ and initial data at time $t_0$ given by $\vec V_1(t_0,z) = \vec V_0(z)$, while $(\vec V_2(t,z))_{t\geq t^\flat}$ is constructed  for $t\ge t^\flat$  (the crossing time introduced in the previous paragraph) for the eigenvalue $h_2(t,z)$ with initial data at time~$t^\flat=t^\flat(z_0)$ satisfying 
\begin{equation*}
 \vec V_2(t^\flat, z)= -\gamma (t^\flat,z)^{-1} {\Pi_2(\partial_t \Pi_2 +\{v,\Pi_2\} )\vec V_1}
 (t^\flat ,z)
\end{equation*}
\[{\rm with }\;\;
\gamma(t^\flat,z) =\|\left(\partial_t \Pi_2+\{v,\Pi_2\}\right)\vec V_1(t^\flat,z) \| _{\C^N} .
\]
We next introduce a family of  transformations, which describes the non-adiabatic effects for 
a wave packet that passes the crossing. For parameters $(\mu,\alpha,\beta)\in\R\times\R^{2d}$ and $\varphi\in\Sch(\R^d)$, we set 
 \[
 {\mathcal T}_{\mu,\alpha, \beta}\varphi(y) =
  \int_{-\infty}^{+\infty}{\rm e}^{i(\mu-\alpha\cdot\beta/2)s^2} {\rm e}^{is\beta\cdot y}\varphi(y-s\alpha)ds .
 \]
This operator maps $\mathcal S(\R^d)$ into itself if and only if $\mu\not=0$. Moreover, for $\mu\not=0$, it is a metaplectic transformation of the Hilbert space
 $L^2(\R^d)$, multiplied by   a complex number. 
In particular,  for any Gaussian function
  $g^\Gamma$, the function   
  $ {\mathcal T}_{\mu,\alpha, \beta}g^\Gamma $ is a Gaussian:
  \[ 
 {\mathcal T}_{\mu,\alpha, \beta}\,g^\Gamma = c_{\mu,\alpha,\beta,\Gamma}\, g^{\Gamma_{\mu, \alpha,\beta,\Gamma}},
 \]
   where $\Gamma_{\mu, \alpha,\beta,\Gamma}\in\mathfrak S^+(d)$ and $c_{\mu,\alpha,\beta,\Gamma}\in \C$ 
    can be computed explicitly (see \cite[Appendix E]{FLR1}).

\medskip 
Combining the parallel transport for the eigenvector and the metaplectic transformation for the non-adiabatic transitions, we obtain the following result. 

\begin{thm}[Propagation through a smooth crossing]\label{theo:WPcodim1}
Let Assumption~{\rm (SC)} on the Hamiltonian matrix $H(t)$ hold and that the crossing is generic. Assume that the initial data 
$(\psi^\eps_0)_{\eps>0}$ are wave packets\
 as above.
  Let $T>0$ be such that the interval $[t_0,t^\flat] $ is strictly included in the interval $[t_0,t_0+T]$. Then, for all $k\in\N$ there exists a constant $C>0$ such that for all $t\in  [t_0,t^\flat)\cup(t^\flat,t_0+T]$ and for all $\eps\leq |t-t^\flat|^{9/2}$,
$$\left\| \psi^\eps(t) - \widehat {\vec V}_1(t) v^\eps_1(t) -\sqrt\eps {\bf 1}_{t>t^\flat} \widehat {\vec V}_2(t) v^\eps_2(t) \right\|_{L^2(\R^d)}\leq C \,\eps^{m},$$
with an exponent $m\ge 5/9$. The components of the approximate solution are 
\[
v^\eps_1(t) = \U_{h_1}^\eps(t,t_0) g^{\Gamma_0,\eps}_{z_0}\quad \text{and}\quad v^\eps_2(t) =\U_{h_2}^\eps(t,t^\flat)v^\eps_2(t^\flat)
\]
\[
\mbox{with}\qquad v^\eps_2(t^\flat)= \gamma^\flat {\rm e}^{iS^\flat/\eps}{\rm WP}^\eps_{z^\flat}({\mathcal T}^\flat
\varphi_0(t^\flat)),
\]
where $\varphi_0(t) = g^{\Gamma_1(t,t_0,z),1}_{0}$ is the leading order profile of the coherent state 
$v^\eps_1(t)$ given by Theorem~\ref{thm:propag1}, and 
\[
\gamma^\flat = \gamma(t^\flat,z^\flat) = \|\left(\{v,\Pi_2\}+\partial_t\Pi_2\right)\vec V_1(t^\flat,z^\flat) \| _{\C^N} .
\]
The transition operator
$\mathcal T^\flat= \mathcal T_{ \mu^\flat, \alpha^\flat,\beta^\flat}$
is defined by the parameters
\[
\mu^\flat = \tfrac{1}{2}\left( 
\partial_t f+\{v,f\}\right)(t^\flat, z^\flat)\;\; \text{and}\;\;
(\alpha^\flat,\beta^\flat) = Jd_zf(t^\flat, z^\flat). 
\]
The constant $C = C(T,k,z_0,\Gamma_0)>0$ is $\eps$-independent but depends on the 
Hamiltonian $H(t,z)$, the final time $T$, and on the initial wave packet's center~$z_0$ and width~$\Gamma_0$.
\end{thm}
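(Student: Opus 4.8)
The plan is to split the evolution at the crossing time into three regimes --- an incoming phase $t\in[t_0,t^\flat-\eta)$, a transition layer $|t-t^\flat|\le\eta$, and an outgoing phase $t\in(t^\flat+\eta,t_0+T]$ --- where $\eta=\eta(\eps)$ is a matching width to be optimized, and to establish the stated two-term approximation on each regime before gluing them. Throughout, the wave packet stays concentrated on the scale $\sqrt\eps$ around the classical trajectory $z_1(t)=\Phi_1^{t,t_0}(z_0)$, so all the analysis is local near this trajectory and, in the layer, near the crossing point $(t^\flat,z^\flat)$.

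For the incoming phase I would invoke the single-mode adiabatic propagation \eqref{exp:adiabGW}: since $z_1(t)$ stays off $\Upsilon$ for $t<t^\flat$, the spectral gap $|h_1-h_2|$ is bounded below and the solution is $\widehat{\vec V}_1(t)\,v^\eps_1(t)+O(\eps)$, with $\vec V_1$ the parallel-transported eigenvector of Proposition~\ref{prop:eigenvector} and $v^\eps_1(t)=\U^\eps_{h_1}(t,t_0)g^{\Gamma_0,\eps}_{z_0}$ the thawed Gaussian of Theorem~\ref{thm:propag1}. The one point requiring care is that the constant in this $O(\eps)$ deteriorates as the gap shrinks; quantifying it as $\eps/\eta^{\kappa}$ for an appropriate power $\kappa$ once the gap has decreased to size $\sim\eta$ fixes how close to $t^\flat$ the phase remains valid and thereby bounds $\eta$ from below.

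The heart of the argument is the transition layer. There I would project the Schr\"odinger equation onto ${\rm Ran}\,\Pi_1$ and ${\rm Ran}\,\Pi_2$ and use Assumption~(SC) to reduce the coupled two-level system, near $(t^\flat,z^\flat)$, to a normal form. The generic-crossing condition gives $f(t,z_1(t))\simeq 2\mu^\flat(t-t^\flat)$ along the trajectory, so the gap $2f$ vanishes linearly in time; rescaling $t-t^\flat$ against the wave-packet scale introduces a fast variable whose integral, after restricting the coupling $(\partial_t\Pi_2+\{v,\Pi_2\})\vec V_1$ to leading order, is exactly the oscillatory integral defining $\mathcal T_{\mu^\flat,\alpha^\flat,\beta^\flat}$. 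The endpoints $s\to\pm\infty$ of this integral match the incoming and outgoing edges of the layer, the Fresnel integration in $s$ produces the $\sqrt\eps$ amplitude on the $h_2$-mode, and the scalar prefactor emerges as $\gamma^\flat=\|(\partial_t\Pi_2+\{v,\Pi_2\})\vec V_1(t^\flat,z^\flat)\|_{\C^N}$. This pins down the transition data $v^\eps_2(t^\flat)=\gamma^\flat\,{\rm e}^{iS^\flat/\eps}\,{\rm WP}^\eps_{z^\flat}(\mathcal T^\flat\varphi_0(t^\flat))$, and since $\mathcal T^\flat$ maps the Gaussian profile $\varphi_0(t^\flat)$ to a Gaussian, $v^\eps_2(t^\flat)$ is again a Gaussian wave packet.

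For the outgoing phase I would apply \eqref{exp:adiabGW} once more, now separately to each mode: the $h_1$-component continues as $\widehat{\vec V}_1(t)v^\eps_1(t)$, while the freshly created $h_2$-component propagates as $\sqrt\eps\,\widehat{\vec V}_2(t)\,\U^\eps_{h_2}(t,t^\flat)v^\eps_2(t^\flat)$, each with its own eigenvector from Proposition~\ref{prop:eigenvector}. The final step is the matching, in which one balances the incoming/outgoing adiabatic error $\sim\eps/\eta^{\kappa}$ against the layer errors --- truncation of the transition integral and the cubic corrections neglected in the normal form --- which grow as positive powers of $\eta$; optimizing this balance yields the admissible window $\eps\le|t-t^\flat|^{9/2}$, i.e. $\eta\sim\eps^{2/9}$, together with the exponent $m\ge 5/9$. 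I expect the transition layer to be the main obstacle: rigorously showing that the coupled near-crossing dynamics is captured by the model operator $\mathcal T_{\mu^\flat,\alpha^\flat,\beta^\flat}$ to the required accuracy, while tracking the Gaussian profile through the rescaling and controlling the remainder uniformly as the gap closes, is the technical core, and it is exactly the interplay of these competing errors that forces the sharp exponents $9/2$ and $5/9$.
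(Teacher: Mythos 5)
First, a point of reference: this survey does not itself prove Theorem~\ref{theo:WPcodim1}. The statement is imported from \cite{FLR1}, and the only place the present paper comes close to a proof is the toy-model computation of Section~\ref{sec:HKtoy}, where the $\sqrt\eps$ transition is read off from the explicit ODE analysis of \cite{Fri56,Hag89}. Your three-zone architecture --- adiabatic propagation up to $t^\flat-\eta$, a transition layer of width $\eta=\eta(\eps)$ around the crossing, adiabatic propagation of both modes afterwards, and an optimization of $\eta$ against the competing errors --- is indeed the skeleton of the argument in \cite{FLR1}, so the overall approach is the right one.

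As written, however, the proposal is a road map rather than a proof, and it passes over exactly the two places where the real work sits. (i) \emph{The transition operator.} You assert that the rescaled coupling integral ``is exactly the oscillatory integral defining $\mathcal T_{\mu^\flat,\alpha^\flat,\beta^\flat}$'', but you never account for the parameters $(\alpha^\flat,\beta^\flat)=Jd_zf(t^\flat,z^\flat)$: these arise because the gap function $f$ must be Taylor-expanded in $z$ around $z^\flat$ on the wave-packet scale $\sqrt\eps$, and it is precisely this spatial dependence that turns what would otherwise be a scalar Landau--Zener coefficient into an operator acting on the profile (the shift $y\mapsto y-s\alpha$ and the factor ${\rm e}^{is\beta\cdot y}$). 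Without carrying this expansion out one cannot justify that $v^\eps_2(t^\flat)$ has the profile $\mathcal T^\flat\varphi_0(t^\flat)$ rather than a scalar multiple of $\varphi_0(t^\flat)$. (ii) \emph{The exponents.} The window $\eps\le|t-t^\flat|^{9/2}$ and the rate $m\ge 5/9$ are presented as the outcome of an optimization whose inputs --- your $\kappa$ and the powers of $\eta$ in the truncation and normal-form errors --- are never computed, so the numbers are copied from the statement rather than derived. Note moreover that since $5/9>1/2$, the created component must itself be captured to relative accuracy $o(\eps^{1/18})$; the leading-order formula \eqref{exp:adiabGW}, with an unquantified $O(\eps)$ remainder that degrades as the gap closes, does not suffice on its own, and the gap-dependent (superadiabatic-type) refinement of that estimate is exactly the missing technical core.
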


Note that by the transversality assumption we have  $\mu^\flat\not=0$,  
which guarantees that ${\mathcal T}^\flat
\varphi_0(t^\flat)$ is Schwartz class. 
The coefficient $\gamma^\flat$ quantitatively describes the distortion of the eigenprojector~$\Pi_1(t)$ during its evolution along the flow generated by $h_1(t)$.
If the matrix $H(t,z)$ is diagonal (or diagonalizes in a fixed orthonormal basis that is $(t,z)$-independent), then  $\gamma^\flat=0$: the equations are decoupled (or can be decoupled), and one can then apply the result for a system of two independent equations with a scalar Hamiltonian and, of course, there is no interaction of order $\sqrt\eps$ between the modes.

The previous theorem extends to more general wave packets as defined in~\eqref{def:WP} and also holds with respect to $\Sigma_k^\eps$-norms for $k\in\N$ (see~\cite[Theorem~3.8]{FLR1}). As mentioned alongside the proof \cite{FLR1}, the argument contains the germs for a full asymptotic expansion in powers of $\sqrt\eps$ (with $\log\eps$ corrections).

\begin{exa}\label{gamma:TM}
Notice that for the toy model $H_{k,\theta}$, 
 we have at any point of $\Upsilon= \{x=0\}$,
$$\mu^\flat = \frac k2,\;\;\alpha^\flat =0,\;\;\beta^\flat =-k,\;\;\gamma^\flat = \frac {|\theta|}{2}, $$
and 
$\displaystyle{\mathcal T^\flat\varphi (y)= \sqrt { \frac {2\pi}{ik}} {\rm e} ^{ \frac k {2i} y^2} \varphi(y)}$ for all 
$\varphi\in \mathcal S(\R)$ and all $y\in\R$. Besides, if $t_0\leq 0$, the trajectories of the minus mode that reach~$\Upsilon$ are those arising from points $z=(q,p)$ with $q<0$. One then has $t^\flat=t_0-q$, $p^\flat= p- kq$ and  the trajectory on the plus mode issued from $z^\flat=(0,p^\flat)$ is 
\begin{equation*}
\Phi^{t,t^\flat}_+ (0,p^\flat)= (t-t^\flat, p^\flat - k(t-t^\flat)= (t-t_0+q, p-2kq-k(t-t_0)).
\end{equation*}
\end{exa}

\subsection{Towards a Herman--Kluk approximation}

The preceding result implies that the leading order of the propagation is still driven by the modes in which the initial data had been taken and we have an Herman--Kluk formula similar to the one obtained in the adiabatic regime, however, with a remainder which is worse. One can conjecture that a more accurate Herman--Kluk approximation  holds in a weaker sense (see~\cite{FLR2}). 
We define the operator $\mathcal I^\eps_{sc}(t)$ by its actions on functions of the form
\[
\psi^\eps_0= \widehat {\vec V_0} v^\eps_0 +O(\sqrt \eps)\ \text{in}\  L^2(\R^d)
\] 
with $v^\eps_0\in L^2(\R^d)$ as
\begin{align*}
&\mathcal I^\eps_{sc}(t) \psi^\eps_0(x)= (2\pi\eps)^{-d}  \int_{\R^{2d}}    
\langle g^\eps_z,v^\eps_0 \rangle\ \vec U_1(t,t_0,z) \ {\rm e}^{\frac{i}{\eps}S_1(t,t_0,z)} \ 
 g^\eps_{\Phi_{h_1}^{t,t_0}(z)} \ dz  + 
\sqrt\eps \,  (2\pi\eps)^{-d} \\
&\;\qquad\times \,\int_{\R^{2d}}    {\bf 1} _{ t>t^\flat(z)} 
\langle g^\eps_z,v^\eps_0 \rangle\ \vec U_2(t,t^\flat(z),z)   {\rm e}^{\frac i\eps S_1(t^\flat(z), t_0, z) + \frac{i}{\eps}S_2(t,t^\flat(z),\mathfrak z^\flat(z))} \ 
 g^\eps_{\Phi_{h_2}^{t,t^\flat(z)}(\mathfrak z^\flat)} dz
 \end{align*}
 with $\mathfrak z^\flat(z)= \Phi_{h_1}^{t^\flat(z), t_0}(z)$ and with some adequate formula (taking into account   the transfer coefficients  $\gamma^\flat(z)$)  for the prefactor $ \vec U_2(t,t^\flat(z),z)= v_2(t,t^\flat(z),z) \vec V_2(t, t^\flat(z), \mathfrak z^\flat(z) )$. 
 
 \medskip

The conjecture is, that if Assumptions~(SC) are satisfied, then, for all $\chi\in \mathcal C_0^\infty(\R)$, in $L^2(\R^d)$, one has 
\begin{align}\label{conjecture}
\int_{\R} \chi(t) \left( \mathcal I _{sc}^\eps(t) \psi^\eps_0 - \mathcal U^\eps_H(t,t_0)\psi^\eps_0  \right) dt = 
 o(\sqrt\eps)
\end{align} 
 Estimates that are ``averaged in time'' have been previously obtained for systems (see~\cite{GMMP,FL08,FL17} for example). They correspond to an observation period that is a short, but non negligible, time interval.  
 
An approximation as \eqref{conjecture} can be easily proved for the toy-model~\eqref{ToMo} (see Section~\ref{sec:HKtoy}) below. The proof in the general case is work in progress~\cite{FLR2}. It involves more refined estimates than those of Theorem~\ref{thm:decoupling}, that we present in the next section.

The authors believe that the technics used for treating the apparition of a new contribution when trajectories reach a hypersurface (the crossing one in this special case) will be useful for developing Herman--Kluk approximations for avoided crossings and conical ones, using the hopping trajectories of~\cite{FL17} and~\cite{FL08} respectively. However, we point out, that conical crossings pose the additional difficulty that Gaussians states do not remain Gaussian, even at leading order, as emphasized in~\cite{Hag94}.


\section{A sketchy proof for Herman--Kluk approximations
}\label{sec:proofs}

We consider   here the scalar and the  adiabatic case and we discuss the proof of Theorems~\ref{thm:scalarhk} and~\ref{thm:decoupling}.

\subsection{The proof strategy}
As mentioned in the introduction, the underlying idea for constructing Gaussian based approximations 
for unitary propagators is to start form equation~\eqref{unitaryprop}. Let us develop a 
proof strategy based on this idea.
\begin{description}
\item[Step 1.]  For each $z\in\R^{2d}$, we build a thawed wave packet approximation $\psi^\eps_{{\rm th},z}(t)$ to the 
Schr\"odinger system \eqref{system} with initial data 
\[
\psi^\eps_{|t=t_0} =  \psi^\eps_0 = \left\{ \begin{array}{cc} g^\eps_z & \text{for}\ N=1,\\ 
\widehat{\vec V_0} g^\eps_z & \text{for}\ N>1.   \end{array}\right.
\]
We prove that $\psi^\eps_{{\rm th},z}(t)$ satisfies an evolution equation of the form
\begin{equation}\label{eq:thawed}
i\eps\partial_t \psi^\eps_{{\rm th},z}(t) = \widehat{H(t)}\psi^\eps_{{\rm th},z}(t) + \eps^{2} R^\eps_z(t),
\quad \psi^\eps_{{\rm th},z}(t_0) = \psi^\eps_0
\end{equation}
with source term $R^\eps_z(t)$. 
\item[Step 2.] For the thawed Gaussian propagation of general initial data 
\[
\psi^\eps_{|t=t_0} = \psi^\eps_0 =\left\{ \begin{array}{cc} v^\eps_0 & \text{for}\ N=1,\\ 
\widehat{\vec V_0} v^\eps_0 & \text{for}\ N>1,   \end{array}\right.
\]
with $v^\eps_0\in L^2(\R^d,\C)$ we consider
\[
{\mathcal I}^\eps_{\rm th}(t,t_0)\psi^\eps_0 = 
(2\pi\eps)^{-d}  \int_{z\in\R^{2d}} \langle  g^\eps_z,v^\eps_0\rangle 
\,\psi^\eps_{{\rm th},z}(t) \, dz.
\]
The error $e^\eps_{\rm th}(t) = {\mathcal U}^\eps_H(t,t_0)\psi^\eps_0 -  {\mathcal I}^\eps_{\rm th}(t,t_0)\psi^\eps_0$
satisfies the evolution equation
\[
i\eps\partial_t e^\eps_{\rm th}(t) = \widehat H(t) e^\eps_{\rm th}(t) + \eps^{2}\Sigma^\eps_{\rm th}(t),
\quad e^\eps_{\rm th}(t_0) = 0
\]
with source term
\[
\Sigma^\eps_{\rm th}(t) =  (2\pi\eps)^{-d} \int_{\R^{2d}} \langle g^\eps_z,\psi^\eps_0\rangle 
R^{\eps}_z(t) dz.
\]
Since $\widehat H(t)$ is self-adjoint, the usual energy argument provides 
\[
\|e^\eps_{\rm th}(t)\| \le \eps\int_{t_0}^t \|\Sigma^\eps_{\rm th}(s)\| ds.
\]
\item[Step 3.] For analysing the source term $\Sigma^\eps_{\rm th}(t)$, we consider the 
integral operator 
\[
\psi\mapsto (2\pi\eps)^{-d} \int_{\R^{2d}} \langle g^\eps_z,\psi\rangle 
R^{\eps}_z(t) dz
\]
and its Bargmann kernel 
\[
k_{\mathcal B}(t;X,Y) = (2\pi\eps)^{-2d} \int_{\R^{2d}} \langle g^\eps_z,g^\eps_Y\rangle 
\langle g^\eps_X, R^{\eps}_z(t)\rangle \,dz,\quad X,Y\in\R^{2d}.
\]
We aim at establishing constants $C_1(t),C_2(t)>0$, that do do not depend on the semiclassical 
parameter $\eps$, such that 
\begin{equation}\label{eq:schur}
\sup_{X} \int_{\R^{2d}} |k_{\mathcal B}(t;X,Y)| dY\le C_1(t),\ 
\sup_{Y} \int_{\R^{2d}} |k_{\mathcal B}(t;X,Y)| dX\le C_2(t),
\end{equation}
since then, by the Schur test,
\[
\|\Sigma^\eps_{\rm th}(t)\| \le \sqrt{C_1(t)C_2(t)}\ \|\psi^\eps_0\|.
\]
\item[Step 4.] For systems, that is, for $N>1$, we use the additional observation 
that  
\begin{align*}
&{\mathcal I}^\eps_{\rm th}(t,t_0)\psi^\eps_0 \\
&=(2\pi\eps)^{-d}  \int_{z\in\R^{2d}} \langle  g^\eps_z,v^\eps_0\rangle 
\vec V(t,t_0,\Phi^{t,t_0}_h(z)) {\rm e}^{\frac{i}{\eps}S(t,t_0,z)} g^{\Gamma(t,t_0,z),\eps}_{\Phi^{t,t_0}_h(z)}\, \, dz + O(\eps).
\end{align*}
\item[Step 5.] We turn the thawed propagation in a frozen one, in proving that
\[
\mathcal{I}^\eps_{\rm th}(t,t_0) = \mathcal{I}^\eps_{\rm fr}(t,t_0) + O(\eps)
\]
in the norm of bounded operators on $L^2(\R^d)$, where the frozen propagator 
is defined by the Herman--Kluk formula
\[
\mathcal{I}^\eps_{\rm fr}(t,t_0)\psi^\eps_0 = (2\pi\eps)^{-d}  \int_{z\in\R^{2d}} \langle  g^\eps_z,v^\eps_0\rangle 
\vec U(t,t_0,z)\, {\rm e}^{\frac{i}{\eps}S(t,t_0,z)} g^\eps_{\Phi^{t,t_0}_h(z)} \, dz
\]
with
\[
\vec U(t,t_0,z) = \left\{ \begin{array}{ll} u_0(t,t_0,z) & \text{for}\ N=1,\\
u_0(t,t_0,z) \vec V(t,t_0,\Phi^{t,t_0}_h(z)) & \text{for}\ N>1.\end{array}\right.
\]
\end{description}

Once the previous steps have been carried out, we have proven the basic $J=0$ version of the scalar 
Herman--Kluk formula of Theorem~\ref{thm:scalarhk} and its generalization 
to the adiabatic situation given in Theorem~\ref{thm:decoupling}.

\subsection{The thawed remainder}
We first consider scalar wave packet propagation as described in Theorem~\ref{thm:propag1} with an accuracy of 
 order $\eps$, that is, for $N_0=1$. The corresponding thawed Gaussian wave packet $\psi^\eps_{{\rm th},z}(t)$ 
that is defined by the right hand side of \eqref{eq:structure} satisfies an evolution equation of the form \eqref{eq:thawed} with a source term
\[
R^{\eps}_z(t) = {\rm e}^{\frac{i}{\eps}S(t,t_0,z)}\,{\rm op}_\eps^w(L_z(t,t_0)) 
g^{\Gamma(t,t_0,z),\eps}_{\Phi^{t,t_0}_h(z)},
\]
where $w\mapsto L_z(t,t_0,w)$ is a smooth function, that is polynomially bounded. It depends on the 
remainder of Taylor expansions of $h(t,\cdot)$ around the point $\Phi^{t,t_0}_h(z)$, 
see \cite[Section~4.3.1]{CR}. For adiabatic propagation by systems with eigenvalue gaps, as presented in 
Theorem~\ref{thm:decoupling}, we work with
\[
\psi^\eps_{{\rm th},z}(t) = {\rm e}^{\frac i\eps S(t,t_0,z)} \widehat {\vec V(t,t_0)} 
\left( 1 + \sqrt\eps\, \vec a(t,t,_0,z)\cdot\frac{x-q(t,t_0,z)}{\sqrt\eps}\right)g^{\Gamma(t,t_0,z),\eps}_{\Phi^{t,t_0}_h(z)},
\] 
where the vector $\vec a(t,t_0,z;x)$ can be constructed explicitly.  
This wave packet satisfies an evolution equation of the form \eqref{eq:thawed} with source term
\[
R^{\eps}_z(t) = {\rm e}^{\frac{i}{\eps}S(t,t_0,z)}\,{\rm op}_\eps^w(\vec L_z(t,t_0)) 
g^{\Gamma(t,t_0,z),\eps}_{\Phi^{t,t_0}_h(z)},
\]
where the vector-valued function $w\mapsto \vec L_z(t,t_0,w)$ contains remainder terms of 
Taylor expansions of $h(t,\cdot)$ around the classical trajectory. We note 
that $\vec L_z(t,t_0,\cdot)$ has contributions both in the range of $\vec V(t,t_0,\cdot)$ 
but also in the orthogonal complement.

\subsection{The Schur estimate}
We now analyse the Bargmann kernel of the source term $\Sigma^\eps_{\rm th}(t)$. Since
$$|\langle g^\eps_z,g^\eps_Y\rangle | =  {\rm e}^{-\frac{|Y-z|^2}{4\eps}},$$
we have 
\[
|k_{\mathcal B} (t;X,Y)| \le 
(2\pi\eps)^{-2d} \int_{\R^{2d}} {\rm e}^{-\frac{|Y-z|^2}{4\eps}} \, |\langle g^\eps_X,R^\eps_z(t)\rangle| \,\,dz.
\]
Hence, the crucial estimate that is required concerns the Bargmann transform of the remainder $R^\eps_z(t)$. 
We write the remainder as
\[
R^\eps_z(t) = {\rm e}^{\frac{i}{\eps} S(t,t_0,z)}\, {\rm op}_\eps^w(L_z(t,t_0)) g^{\Gamma_z,\eps}_{\Phi_z},
\]
where $w\mapsto L_z(t,t_0,w)$ is a smooth function on phase space, that grows at most polynomially, 
and $\Gamma_z = \Gamma(t,t_0,z)$, $\Phi_z = \Phi^{t,t_0}(z)$ are short-hand notations for 
the classical quantities defined in \eqref{def:flot1} and \eqref{def:Gamma}, respectively.
We express the Bargmann transform as a phase space integral
\[
\langle g^\eps_X,R^\eps_z(t)\rangle = {\rm e}^{\frac{i}{\eps} S(t,t_0,z)} \int_{\R^{2d}} L_z(t,t_0,w) 
\, {\rm Wig}(g_X^\eps,g_{\Phi_z}^{\Gamma_z,\eps})(w) \, dw 
\]
with respect to the cross-Wigner function of two Gaussian wave packets with different centers and different width. 
One can prove (see \cite[Lemma 5.20]{LL}) that
\begin{align*}
&{\rm Wig}(g_X^\eps,g_{\Phi_z}^{\Gamma_z,\eps})(w) \\
&=  (\pi\eps)^{-d}\, \gamma_{X,z}\,    
\exp\left(\frac{i}{\eps}J(X-\Phi_z)\cdot w + \frac{i}{2\eps} G_z(t)(w-m_{X,z})\cdot(w-m_{X,z})\right),
\end{align*}
where $m_{X,z} = \frac12(X+\Phi_z)$ is the mean of the two centres, while $\gamma_{X,z}$ is a complex number with $|\gamma_{X,z}|\le 1$ and $G_z(t)\in{\mathfrak S}^+(2d)$. Repeated integration by parts, see 
\cite[Proposition 5.21]{LL}, yields an upper bound 
\[
\mid\langle g^\eps_X, R^{\eps}_z(t)\rangle\mid \ \le\  c_z(t) \left\langle\frac{X-\Phi^{t,t_0}(z)}{\sqrt\eps}\right\rangle^{-(d+1)}, 
\]
where the constant $c_z(t)>0$ depends on bounds of the function $L_z(t,t_0,w)$ and is 
inversely proportional to the smallest eigenvalue of $\Im G_z(t)$. A combination of arguments 
given in the proofs of \cite[Lemma~5.18]{LL} and \cite[Lemma 3.2]{R}, reveals that 
the spectrum of $\Im G_z(t)$ is bounded away from zero uniformly in $z$, 
which implies the existence of constant $c(t)>0$ such that 
\[
\mid\langle g^\eps_X, R^{\eps}_z(t)\rangle\mid \ \le\  c(t) \left\langle\frac{X-\Phi^{t,t_0}(z)}{\sqrt\eps}\right\rangle^{-(d+1)}. 
\]
This gives us enough decay to deduce the existence of constants $C_1(t),C_2(t)>0$ such that the 
Schur estimate \eqref{eq:schur} holds.

 \subsection{Passing from thawed to frozen approximation}
 
Here we present a slight variant of \cite[Proposition~4.1]{R} for passing from 
a thawed to a frozen Gaussian approximation by a linear deformation argument. 

\begin{prop} Let $\vec U(t,t_0,z)$ be a smooth function with values in $\C^N$, $N\ge 1$, whose derivatives are 
at most of polynomial growth.
 Then,
\begin{align*}
&(2\pi\eps)^{-d} \int_{\R^{2d}} \langle g^\eps_z,\psi\rangle 
\,\vec U(t,t_0,z)\, {\rm e}^{\frac{i}{\eps} S(t,t_0,z)} g^{\Gamma(t,t_0,z),\eps}_{\Phi^{t,t_0}_h(z)} dz \\
&= 
(2\pi\eps)^{-d} \int_{\R^{2d}} \langle g^\eps_z,\psi\rangle 
u_0(t,t_0,z)\,\vec U(t,t_0,z)\, {\rm e}^{\frac{i}{\eps} S(t,t_0,z)} g^{\eps}_{\Phi^{t,t_0}_h(z)} dz + O(\eps)
\end{align*}
uniformly for all $\psi\in L^2(\R^d,\C)$ with norm one.
\end{prop}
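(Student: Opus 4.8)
The plan is to interpolate continuously between the thawed Gaussian $g^{\Gamma(t,t_0,z),\eps}_{\Phi^{t,t_0}_h(z)}$ and the frozen one $g^\eps_{\Phi^{t,t_0}_h(z)}=g^{i\Id,\eps}_{\Phi^{t,t_0}_h(z)}$ inside the $z$-integral, following the linear deformation idea of \cite[Proposition~4.1]{R}. Since the Siegel set $\mathfrak S^+(d)$ is convex, for $s\in[0,1]$ and $z\in\R^{2d}$ the matrix $\Gamma_z(s)=(1-s)\Gamma(t,t_0,z)+s\,i\Id$ stays in $\mathfrak S^+(d)$, its imaginary part $(1-s)\Im\Gamma(t,t_0,z)+s\,\Id$ being positive definite. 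I would then introduce the one-parameter family of operators
$$
\Psi(s)\psi=(2\pi\eps)^{-d}\int_{\R^{2d}}\langle g^\eps_z,\psi\rangle\,b(s,z)\,\vec U(t,t_0,z)\,{\rm e}^{\frac i\eps S(t,t_0,z)}\,g^{\Gamma_z(s),\eps}_{\Phi^{t,t_0}_h(z)}\,dz,
$$
with a scalar prefactor $b(s,z)$ normalized by $b(0,z)=1$, so that $\Psi(0)$ is the left-hand side. The goal becomes to choose $b$ with $b(1,z)=u_0(t,t_0,z)$ and to prove $\int_0^1\Psi'(s)\,ds=O(\eps)$ in operator norm.

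Differentiating in $s$ gives $\partial_s g^{\Gamma_z(s),\eps}_{\Phi}=\big(\partial_s\log c_{\Gamma_z(s)}+\tfrac{i}{2\eps}\dot\Gamma_z\,(x-q_z)\cdot(x-q_z)\big)g^{\Gamma_z(s),\eps}_{\Phi}$, where $q_z$ is the position centre of $\Phi^{t,t_0}_h(z)$ and $\dot\Gamma_z=i\Id-\Gamma(t,t_0,z)$. The quadratic-times-Gaussian term splits into its Gaussian mean, the order-one scalar $\tfrac i4{\rm tr}\big(\dot\Gamma_z(\Im\Gamma_z(s))^{-1}\big)$ coming from the position covariance $\tfrac\eps2(\Im\Gamma_z(s))^{-1}$, plus a mean-zero fluctuation. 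I would fix $b$ by the scalar linear ODE $\partial_s b=-b\big(\partial_s\log c_{\Gamma_z(s)}+\tfrac i4{\rm tr}(\dot\Gamma_z(\Im\Gamma_z(s))^{-1})\big)$, $b(0,z)=1$, which is exactly designed to annihilate the order-one part of $\Psi'(s)$. The endpoint identity $b(1,z)=u_0(t,t_0,z)$ is then a determinant computation: using $c_{\Gamma(t,t_0,z)}={\rm det}^{-1/2}(A+iB)$ from Theorem~\ref{thm:propag1} together with the definition \eqref{def:HKprefactor} of the prefactor, one checks that the solution of the ODE at $s=1$ reproduces $2^{-d/2}{\rm det}^{1/2}(A+D+i(C-B))$, with the branch of the square root following from continuity in $s$.

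After this cancellation, $\Psi'(s)$ is an integral operator whose integrand carries only the mean-zero fluctuation, namely $\tfrac{i}{2\eps}\big(\dot\Gamma_z(x-q_z)\cdot(x-q_z)-\tfrac\eps2{\rm tr}(\dot\Gamma_z(\Im\Gamma_z(s))^{-1})\big)$ times the Gaussian, a degree-two Hermite factor. The source of the gain is the elementary identity $(x-q_z)\,g^{\Gamma_z(s),\eps}_{\Phi}=-i\eps\,\partial^c_p g^{\Gamma_z(s),\eps}_{\Phi}$, where $\partial^c_p$ differentiates only the centre of the Gaussian: each factor $(x-q_z)$ costs one power of $\eps$ together with one derivative, so that $\eps^{-1}(x-q_z)^2$ becomes $\eps$ times a second-order derivative. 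Using the invertible Jacobian $F(t,t_0,z)$ to turn these centre-derivatives into genuine $\partial_z$-derivatives and integrating by parts twice in $z$ should recast $\Psi'(s)$ as an operator of exactly the Gaussian type analysed in the preceding subsection, now carrying an overall factor $\eps$; the cross-Wigner computation and the Schur test \eqref{eq:schur} would then give $\|\Psi'(s)\|_{L^2\to L^2}\le C(t)\,\eps$ uniformly in $\eps$ and $s\in[0,1]$, and hence $\|\Psi(1)-\Psi(0)\|=O(\eps)$. The smoothness and polynomial growth of $\vec U$, and the bounds on the $z$-derivatives of $\Gamma$, $F$ and $\nabla_z S$ coming from \eqref{hyp:H}, keep all constants independent of $\eps$ and of $\|\psi\|=1$.

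The hard part is exactly the integration by parts just invoked: both the analysing coefficient $\langle g^\eps_z,\psi\rangle$ and the phase ${\rm e}^{\frac i\eps S(t,t_0,z)}$ produce $\eps^{-1}$ factors under $\partial_z$, which a priori cancel the gain obtained from $(x-q_z)=-i\eps\,\partial^c_p$. The resolution is that these contributions reassemble into the $z$-gradient of the total semiclassical phase; since $S(t,t_0,z)$ is the action generating the flow $\Phi^{t,t_0}_h$, the classical relations $\dot q=\partial_p h$ and $\partial_z S=p\cdot\partial_z q-\ldots$ render this gradient bounded and force the $\eps^{-1}$ terms to cancel, exactly the Egorov/stationary-phase mechanism underlying the Herman--Kluk representation. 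Carrying out this cancellation uniformly in $z$, so that the Schur constants stay $\eps$-independent, is where the real work lies; once it is in place, the deformation and the Schur estimate close the proof.
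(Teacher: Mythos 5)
Your overall architecture --- the linear deformation ${\mathcal G}(z,s)=(1-s)\Gamma(t,t_0,z)+is\Id$ inside the Siegel half-space, a scalar prefactor $b(s,z)$ determined by an ODE in $s$ so that $\partial_s\Psi(s)=O(\eps)$, and the endpoint identification $b(1,z)=u_0$ --- is exactly the paper's (and that of \cite[Proposition~4.1]{R}). The gap is in the ODE you impose on $b$. You declare the order-one part of $\partial_s\Psi(s)$ to be the Gaussian mean $\tfrac i4{\rm tr}\bigl(\dot\Gamma_z(\Im{\mathcal G}(z,s))^{-1}\bigr)$ of the quadratic Hermite factor and claim the mean-zero fluctuation becomes $O(\eps)$ after integration by parts. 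Neither holds. The mean-zero degree-two part is an $O(1)$ operator (its $a^\dagger a^\dagger$, $aa$ and traceless $a^\dagger a$ components are not small), and the genuine $O(1)$ term that must be absorbed into the prefactor only appears \emph{after} the integration by parts in $z$. The paper's identity is
\begin{align*}
(x-\Phi_q(z))\,\overline{g^\eps_z(y)}\,{\rm e}^{\frac i\eps S(z)}\widetilde g^{{\mathcal G}(z,s),\eps}_{\Phi(z)}
=\Bigl(\tfrac\eps i M^{-T}_{{\mathcal G}(z,s)}(z)(i\partial_q+\partial_p)-f\Bigr)\overline{g^\eps_z(y)}\,{\rm e}^{\frac i\eps S(z)}\widetilde g^{{\mathcal G}(z,s),\eps}_{\Phi(z)}
\end{align*}
with $M_{{\mathcal G}(z,s)}=-i{\mathcal G}A-{\mathcal G}B+iC+D$: the derivative $(i\partial_q+\partial_p)$ acts simultaneously on the analyzing Gaussian, the action phase and the moving centre (this is where the $\eps^{-1}$ cancellation you postpone to your last paragraph actually takes place), and a single integration by parts then produces the trace $-\tfrac12{\rm tr}(M^{-1}\partial_sM)$, which depends on the Jacobian blocks $A,B,C,D$ and not only on ${\mathcal G}(z,s)$. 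The correct ODE is $\partial_s\vec W=\tfrac12{\rm tr}(M^{-1}\partial_sM)\vec W$, solved by $\vec W=2^{-d/2}\det^{1/2}(M_{{\mathcal G}(z,s)})\vec U$ via Liouville's formula, interpolating between $\det^{-1/2}(A+iB)$ at $s=0$ and $u_0$ at $s=1$.

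That your ODE cannot reach $u_0$ is checkable on the free particle in $d=1$: with $A=D=1$, $B=\tau=t-t_0$, $C=0$, $\Gamma=(\tau+i)/(1+\tau^2)$, your right-hand side $-\partial_s\log c_{{\mathcal G}(z,s)}-\tfrac i4{\rm tr}\bigl(\dot\Gamma_z(\Im{\mathcal G}(z,s))^{-1}\bigr)$ equals $i\tau/\bigl(4(1+s\tau^2)\bigr)$, which is purely imaginary, so $|b(1,z)|=1$, whereas $|u_0|=2^{-1/2}(4+\tau^2)^{1/4}$. So the ``determinant computation'' you defer would fail, and correspondingly the operator you discard as mean-zero fluctuation is not $O(\eps)$. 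The repair is structural, not technical: perform the integration by parts \emph{before} deciding what to cancel. Since each unscaled monomial $(x-\Phi_q(z))^\alpha$ already contributes $\eps^{\lceil|\alpha|/2\rceil}$ in operator norm by \eqref{lem:IPP}, one integration by parts per linear factor suffices (no double integration by parts is needed, and the cubic correction $f$ is automatically $O(\eps)$); the zeroth-order term it generates --- the derivative falling on $(x-\Phi_q(z))_\ell$, yielding ${\rm tr}\bigl(L(z,s)(iA+B)\bigr)$ --- is the quantity the prefactor ODE must annihilate.
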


\begin{proof}
For notational simplicity, we omit the time-dependance in $S = S(t,t_0,z)$, $\Phi = \Phi^{t,t_0}_h(z)$,
$\Gamma = \Gamma(t,t_0,z)$, and $\vec U = \vec U(t,t_0,z)$.   
We consider both operators, the thawed and the frozen one, as special members of a class of linear operators of the form 
\[
{\mathcal I}\psi = (2\pi\eps)^{-d} \int_{\R^{2d}} \langle g^\eps_z,\psi\rangle (x-\Phi_q(z))^\alpha\, 
\vec W(z)\, \e^{\frac{i}{\eps} S(z)}  
g_{\Phi(z)}^{{\mathcal G}(z),\eps} dz\,,
\]
that are defined by two smooth functions ${\mathcal G}:\R^{2d}\to\mathfrak{S}^+(d)$ and $\vec W:\R^{2d}\to\C^N$.
The Siegel half-space $\mathfrak{S}^+(d)$ is invariant under inversion in the sense that any 
$G\in{\mathfrak S}^+(d)$ is invertible with $-G^{-1}\in{\mathfrak S}^+(d)$. We require that the smallest eigenvalue of $\Im({\mathcal G}(z))$ and $\Im(-\mathcal G^{-1}(z))$ are bounded away from zero 
uniformly in $z$. The monomial powers $(x-\Phi_q(z))^\alpha$ with $\alpha\in\N_0^d$ are included for technical reasons, that will become clear soon. These operators are bounded on $L^2(\R^d)$ and satisfy
\begin{equation}\label{lem:IPP}
\|{\mathcal I}\| \le C\, \eps^{\lceil|\alpha|/2\rceil},
\end{equation}
where the constant $C >0$ independent of $\eps$, and $\lceil|\alpha|/2\rceil$ denotes  
the smallest integer $\ge |\alpha|/2$, see \cite[Proposition~5.12]{LL}. 
We linearly link the thawed matrix function $z\mapsto\Gamma(z)$ and the frozen $z\mapsto i\Id$ by setting
\[
{\mathcal G}(z,s) = (1-s)\Gamma(z) + i s\Id\in\mathfrak{S}^+(d),\quad s\in[0,1],
\]
and consider the corresponding Gaussian function, that is only partially normalised, 
\[
\widetilde g^{{\mathcal G}(z,s),\eps}_{\Phi(z)}(x) = (\pi\eps)^{-d/4} 
\e^{\frac{i}{\eps}\Phi_p(z)\cdot(x-\Phi_q(z))+\frac{i}{2\eps}{\mathcal G}(z,s)(x-\Phi_q(z))\cdot(x-\Phi_q(z))}. 
\]
We now aim at constructing a smooth function $\vec W(z,s)$ with two properties. 
\begin{enumerate}
\item
Firstly, we require that
$\vec W(z,0) = {\rm det}^{-1/2}(A(z)+iB(z)) \vec U(z)$, 
ensuring that the deformation value $s=0$ corresponds to the thawed approximation. 
\item
Secondly, we hope to acchieve
\[
\frac{\partial}{\partial s}\, (2\pi\eps)^{-d} \int_{\R^{2d}} \langle g^\eps_z,\psi\rangle \vec W(z,s) \e^{\frac{i}{\eps} S(z)}  
\widetilde g_{\Phi(z)}^{{\mathcal G}(z,s),\eps} dz = O(\eps), 
\]
uniformly for all $\psi\in L^2(\R^d)$ of norm one. 
\end{enumerate}
Once this construction has been carried out, we will verify that  the deformation yields 
the frozen approximation for $s=1$, that is, $\vec W(z,1) = u_0(z)\vec U(z)$. 
As a first step, we open the inner product involving the standard Gaussian $g^\eps_z$ and examine the multi-variate exponential function
\[
\overline{g^\eps_z(y)} \e^{\frac{i}{\eps}S(z)} \widetilde g_{\Phi(z)}^{{\mathcal G}(z,s),\eps}.
\] 
Using the derivative properties of the action $S(z)$, one obtains the identity 
\begin{align*}
&(x-\Phi_q(z)) \overline{g^\eps_z(y)} \e^{\frac{i}{\eps}S(z)} \widetilde g_{\Phi(z)}^{{\mathcal G}(z,s),\eps} \\
&= 
\left(\frac{\eps}{i}\,M^{-T}_{{\mathcal G}(z,s)}(z)(i\partial_q+\partial_p)-f(x,z,s)\right) \overline{g^\eps_z(y)} \e^{\frac{i}{\eps}S(z)} \widetilde g_{\Phi(z)}^{{\mathcal G}(z,s),\eps}
\end{align*}
where
\[
M_{{\mathcal G}(z,s)}(z) = -i{\mathcal G}(z,s) A(z)-{\mathcal G}(z,s) B(z) + iC(z) + D(z)
\]
is an invertible complex $d\times d$ and 
\[
f(x,z,s) =  M^{-T}_{{\mathcal G}(z,s)}(z) \left(\tfrac12 \left((i\partial_{q_j}+\partial_{p_j}){\mathcal G}(z,s)\right)(x-\Phi_q(z))\cdot(x-\Phi_q(z))\right)_{j=1}^d
\]
is a vector-valued function, that is quadratic in $x-\Phi_q(z)$. Since $\frac{i}{2\eps}(x-\Phi_q(z))\cdot f(x,z,s)$ is cubic in $x-\Phi_q(z)$, we use \eqref{lem:IPP} and recognize its contribution as a term of order $\eps$.
Thus, we have
\begin{align*}
&(2\pi\eps)^{-d}\int_{\R^{2d}} \langle g^\eps_z,\psi\rangle \vec W(z,s) \e^{\frac{i}{\eps} S(z)}  
\partial_s \widetilde g_{\Phi(z)}^{{\mathcal G}(z,s),\eps} dz\\
&= 
(2\pi\eps)^{-d}\int_{\R^{2d}}  \vec W(z,s) \, L(z,s) (x-\Phi_q(z))\cdot (i\partial_q+\partial_p)
\langle g^\eps_z,\psi\rangle \e^{\frac{i}{\eps} S(z)} \widetilde g_{\Phi(z)}^{{\mathcal G}(z,s),\eps} dz + O(\eps),
\end{align*}
with
\[
L(z,s) = \tfrac{1}{2}\,M^{-1}_{{\mathcal G}(z,s)}(z)\partial_s{\mathcal G}(z,s). 
\] 
We perform an integration by parts and arrive at 
\begin{align*}
&-(2\pi\eps)^{-d}\int_{\R^{2d}} \sum_{k=1}^d (i\partial_{q_k}+\partial_{p_k})\left(\vec W(z,s)\, 
\left(L(z,s) (x-\Phi_q(z))\right)_k\right) \\
&\hspace*{12em}\times\langle g^\eps_z,\psi\rangle \e^{\frac{i}{\eps} S(z)} \widetilde g_{\Phi(z)}^{{\mathcal G}(z,s),\eps} dz + O(\eps).
\end{align*}
Computing the derivative we obtain several terms that are linear in $x-\Phi_q(z)$, and thus of order $\eps$. The contributions we have to keep are
\begin{align*}
&-\vec W(z,s)\sum_{k,\ell=1}^dL(z,s)_{k\ell}(i\partial_{q_k}+\partial_{p_k})(x-\Phi_q(z))_\ell \\
&= 
\vec W(z,s)\sum_{k,\ell=1}^dL(z,s)_{k\ell}(iA(z)+B(z))_{\ell k}
= \vec W(z,s)\,{\rm tr}\!\left(L(z,s)(iA(z)+B(z))\right).
\end{align*}
We observe that
\[
L(z,s)(iA(z)+B(z)) =-\tfrac{1}{2} \,M^{-1}_{{\mathcal G}(z,s)}(z) \partial_s M_{{\mathcal G}(z,s)}(z).
\] 
This suggests that $\vec W(z,s)$ should solve the differential equation
\[
\partial_s \vec W(z,s) - \tfrac{1}{2}{\rm tr}\!\left(M_{{\mathcal G}(z,s)}^{-1}(z)\partial_s M_{{\mathcal G}(z,s)}(z)\right) \vec W(z,s)= 0,
\] 
that is, 
\[
\vec W(z,s) = 2^{-d/2}\ {\rm det}^{1/2}(M_{{\mathcal G}(z,s)}(z))\, \vec U(z),
\]
by using Liouville's formula for the differentiation of determinants. Checking for the initial condition at $s=0$, we observe that
\begin{align*}
M_{{\mathcal G}(z,0)}(z) &= -i\left(\Gamma(z)(A(z)-iB(z)) - (C(z)-iD(z))\right)\\
&= -i\left(\Gamma(z)-\overline{\Gamma(z)}\right) (A(z)-iB(z))\\
&= 2\ \Im\Gamma(z) \, (A(z)-iB(z)) = 2\ (A(z)+iB(z))^{-T}
\end{align*}
which implies $\vec W(z,0) = {\rm det}^{-1/2}(A(z)+iB(z))\vec U(z)$, indeed.
\end{proof} 
%


 \subsection{ Herman-Kluk approximation for the toy model}\label{sec:HKtoy}

The Schr\"odinger equation associated with~\eqref{ToMo} writes as a transport equation and, integrating along curves $s\mapsto (s,x+s)$ it reads 
 \begin{equation}\label{eq:toto}
 i\eps  \frac{d}{ds}\psi^\eps(s,x+s) = k(x+s)V_\theta(x+s)\psi^\eps(s,x+s)
 \end{equation}
with $\displaystyle{V_\theta(x)= \begin{pmatrix} 0&{\rm e}^{i\theta x}\\ {\rm e}^{-i\theta x}&0\end{pmatrix}}$.
 The
  equation reduces to 
 the  system of ODEs 
\beq\label{matad}
i\eps  \frac{d}{d\sigma} \eta^\eps(\sigma) =   {k\sigma} V_\theta(\sigma)\eta^\eps(\sigma).
\eeq
 This problem was solved  first in \cite{Fri56} then in a more general setting in  \cite{Hag89} where an asymptotic expansion in power of $\eps^{1/2}$ (with power of $\log\eps$ corrections),  at any order,  is established for  ${\mathcal R}^\eps_{k,\theta}(\sigma,\sigma_0)$, the propagator (or resolvent matrix)  of the linear differential equation \eqref{matad}; we shall use this result below .
 It is then possible to prove the Herman-Kluk approximation of the conjecture~\eqref{conjecture}.

  \begin{prop}\label{prop:HKTM}
 Consider $t_0<0$ and an  initial data of the form 
\begin{equation}\label{data:tm}
\psi^\eps_0= {\vec V_-} v^\eps_0 +O(\eps)\ \text{in}\  L^2(\R^d),
\end{equation}
where $\vec V_-$ is the smooth eigenvector for the minus mode (see~\eqref{eigenvalues:TM}). Set
\begin{align}\nonumber
\mathcal I^\eps_{sc}(v^\eps_0)= 
& (2\pi\eps)^{-d} \int_{z\in\R^{2d}} \langle v^\eps _0, g^\eps_z\rangle {\rm e}^{\frac i\eps S_-(t,t_0,q)} \vec V_-(t,t_0,q+t-t_0) g^\eps_{\Phi_-^{t,t_0}(z)}dz\\
\nonumber
&\, + \sqrt\eps \, \kappa_- 
(2\pi\eps)^{-d} \int_{z\in\R^{2d}} {\bf 1}_{q>0} {\bf 1}_{ t>t^\flat(z) } \langle v^\eps _0, g^\eps_z\rangle {\rm e}^{\frac i\eps (S_-(t_0,t^\flat(z),q)+S_+(t,t^\flat(z) ,0))}\\
\label{HKcros}
&\qquad \times\, 
 \vec V_+(t,t^\flat(z),t-t^\flat(z))
 g^{\eps, 1-ik}_{\Phi_+^{t,t^\flat (z)}(0,p^\flat(z))}
dz \end{align}
 where $\displaystyle{ \kappa
:= \sqrt{\frac {2\pi} {ik}} \frac{\theta}{2}} $ and  for $z=(q,p)$, $t^\flat(z)=t_0-q$, and $p^\flat(z)=p-kq$ have been computed in Example~\ref{gamma:TM}.
Then, 
 for all $\chi\in L^1(\R)$,  in $L^2(\R^d)$
$$
\int_{\R} \chi(t) \left( \mathcal I_{sc}^\eps (v^\eps_0)- 
\mathcal U^\eps_{H_{k,\theta}}(t,t_0)\psi^\eps_0(x) \right)dt =o(\sqrt\eps).
$$
 \end{prop}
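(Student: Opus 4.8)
The plan is to exploit the exact solvability of the toy model: along the characteristics $s\mapsto(s,x+s)$ the equation \eqref{eq:toto} reduces to the matrix ODE \eqref{matad}, whose propagator $\mathcal R^\eps_{k,\theta}(\sigma,\sigma_0)$ is the only genuinely $\eps$-dependent object. First I would write $\mathcal U^\eps_{H_{k,\theta}}(t,t_0)$ explicitly: transport the $x$-variable by the shift $x\mapsto x+(t-t_0)$ and act by $\mathcal R^\eps_{k,\theta}$ between the corresponding values of $\sigma$. Applying this to a single Gaussian $\vec V_-\,g^\eps_z$ and inserting Hagedorn's asymptotic expansion of $\mathcal R^\eps_{k,\theta}$ from \cite{Hag89} (in powers of $\sqrt\eps$, with $\log\eps$ corrections, uniform through $\sigma=0$), the leading term is the adiabatic parallel transport along $\vec V_-$ and the first, $O(\sqrt\eps)$, correction is the non-adiabatic transition generated at the crossing $\sigma=0$. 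The decisive advantage over Theorem~\ref{theo:WPcodim1} is that this expansion is uniform across the crossing.

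Second, I would feed in the explicit classical data of the toy model. By Examples~\ref{clas:TM}, \ref{pt:TM} and \ref{gamma:TM} the flows $\Phi^{t,t_0}_\pm$ are linear shifts, so their Jacobians are the identity, the Gaussian width stays frozen ($\Gamma_\pm(t)\equiv\Gamma_0$) and the Herman--Kluk prefactor is $u_0\equiv1$; moreover the crossing parameters $\mu^\flat=k/2$, $\beta^\flat=-k$, $\gamma^\flat=|\theta|/2$ and the transition operator $\mathcal T^\flat$ are all independent of the point of $\Upsilon$. Hence the thawed-to-frozen reduction of Section~\ref{sec:proofs} is trivial here, and the single-packet asymptotics read $\mathcal U^\eps_{H_{k,\theta}}(t,t_0)\bigl(\vec V_- g^\eps_z\bigr)= \widehat{\vec V}_1(t)v^\eps_1(t,z)+\sqrt\eps\,\mathbf 1_{t>t^\flat(z)}\widehat{\vec V}_2(t)v^\eps_2(t,z)+\rho^\eps(t,z)$, with $v^\eps_2(t^\flat)=\gamma^\flat e^{iS^\flat/\eps}{\rm WP}^\eps_{z^\flat}(\mathcal T^\flat\varphi_0(t^\flat))$ a metaplectically deformed Gaussian.

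Third, I would assemble the Herman--Kluk formula by superposition. Using the reconstruction formula \eqref{reconstruction} and linearity, $\mathcal U^\eps_{H_{k,\theta}}(t,t_0)\psi^\eps_0=(2\pi\eps)^{-d}\int\langle g^\eps_z,v^\eps_0\rangle\,\mathcal U^\eps_{H_{k,\theta}}(t,t_0)\bigl(\vec V_-g^\eps_z\bigr)\,dz+O(\eps)$. Integrating the two main terms in $z$ reproduces exactly the two integrals defining $\mathcal I^\eps_{sc}(v^\eps_0)$: the adiabatic leading term gives the first, and the $\sqrt\eps$ transition term, carrying the indicator $\mathbf 1_{t>t^\flat(z)}$, the coefficient $\kappa=\sqrt{2\pi/(ik)}\,\theta/2$, the hopped centre $\Phi_+^{t,t^\flat(z)}(0,p^\flat(z))$ and the combined action, gives the second, cf.\ \eqref{HKcros}. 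It then remains to show that $\int_\R\chi(t)\,\rho^\eps(t,z)\,dt$, superposed against $\langle g^\eps_z,v^\eps_0\rangle$, defines an operator on $L^2(\R^d)$ of norm $o(\sqrt\eps)$.

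The crux is this last error estimate, and it is where the time-average is indispensable. Away from the crossing, precisely for $|t-t^\flat(z)|\ge\eps^\gamma$ (with $\gamma\in(0,\tfrac12)$, e.g.\ $\gamma=2/9$ as in Theorem~\ref{theo:WPcodim1}), Hagedorn's remainder gives $\|\rho^\eps(t,z)\|_{L^2}=o(\sqrt\eps)$ uniformly in $z$; in the shrinking window $|t-t^\flat(z)|<\eps^\gamma$ the sharp indicator and the $\log\eps$ corrections only yield $\|\rho^\eps(t,z)\|_{L^2}=O(\sqrt\eps)$, because there both the exact solution and the ansatz track the minus mode to leading order while their plus-mode contributions are separately $O(\sqrt\eps)$. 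I would then run the Schur argument of Section~\ref{sec:proofs} on the Bargmann kernel of the superposition: the Gaussian overlaps localize the integration variable $z$ near the kernel variable, so that the near-crossing contribution is controlled by $\sqrt\eps\,\sup_{a\in\R}\int_{|t-a|<\eps^\gamma}|\chi(t)|\,dt$. For $\chi\in L^1(\R)$ this modulus of concentration tends to $0$ as $\eps\to0$ (approximate $\chi$ in $L^1$ by a compactly supported continuous function), so the window contributes $o(\sqrt\eps)$ while the good region contributes $\|\chi\|_{L^1}\cdot o(\sqrt\eps)$. The main obstacle is exactly this passage: trading the loss of pointwise-in-$t$ accuracy at the crossing for the vanishing mass of the time window, uniformly over the superposition in $z$, which forces one to combine the uniform-through-crossing control of $\mathcal R^\eps_{k,\theta}$ with the $z$-localization furnished by the Schur test.
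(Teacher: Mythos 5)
Your proposal follows essentially the same route as the paper's proof: integrate the toy model exactly along characteristics, insert the Friedrichs--Hagedorn asymptotics of the ODE propagator $\mathcal R^\eps_{k,\theta}(\sigma,\sigma_0)$, superpose via the reconstruction formula to identify the two terms of $\mathcal I^\eps_{sc}$, and absorb the $O(\sqrt\eps)$ discrepancy concentrated near the crossing by the vanishing $L^1$-mass of $\chi$ on the $\eps^\gamma$-windows around $t^\flat(z)$. The only (harmless) differences are organisational: the paper writes the exact solution for general data with the sharp indicator in the position variable $x$ and converts it into ${\bf 1}_{t>t^\flat(z)}$ at the very end by regularizing the discontinuity, whereas you apply the single-packet asymptotics first and superpose afterwards; and in your final estimate the operative mechanism is Fubini in $t$ for fixed $z$ (giving a uniform-in-$z$ $o(\sqrt\eps)$ bound before the Schur test) rather than any $z$-localization coming from the Bargmann kernel.
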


The 
scalar Herman-Kluk prefactor is~$1$ because the width of the Gaussians wave packet stays constant along the propagation (see Ex.~\ref{clas:TM}). Note also (see Ex.~\ref{gamma:TM})
$$ \mathcal T^\flat g^{1}(y)= \e^{-\frac i{2\eps} ky^2} g^1= g^{1, (1-ik/2)};$$
in the statement above, we have chosen not to froze the Gaussian after crossing time. 
The value of the coefficient $\kappa$ arises from Theorem~\ref{theo:WPcodim1} and  Ex.~\ref{gamma:TM}.

\medskip

The proof  relies on the  analysis of the propagator ${\mathcal R}^\eps_{k,\theta}(\sigma,\sigma_0)$ as performed in~\cite{Hag89} (see p.~280 therein).
 We denote by 
 $Y_\pm(\sigma,\sigma_0) $
 the time-dependent  eigenprovectors of $V_\theta(\sigma)$ for the eigenvalues $E_\pm(\sigma)=\pm k\sigma$ (see Remark~\ref{pt:TM}):
 \begin{equation}\label{prop:r}
 \vec Y_\pm(\sigma,\sigma_0)= \vec V_\pm(\sigma,\sigma_0,\sigma)=  \vec V_\pm(\sigma+r,\sigma_0+r,\sigma),\;\;\forall r\in\R.
 \end{equation}
 Then,
  the solutions $\eta(\sigma)$ with initial data at time $\sigma_0<0$ of the form $\eta(\sigma_0)= \eta_0 \vec Y_- (\sigma_0)$:
\begin{enumerate}
\item If  $\sigma<0$, then 
$\displaystyle{\eta(\sigma)= \e^{\frac i \eps k \int_{\sigma_0}^\sigma \tau d\tau } \vec Y_-(\sigma,\sigma_0) \eta_0+O(\eps).}$
\item If $\sigma>0$, then
\begin{align*}
\eta(\sigma)= &\e^{\frac i \eps k \int_{\sigma_0}^\sigma \tau d\tau } \vec Y_-(\sigma,\sigma_0)\eta_0 \\
&\qquad - \sqrt\eps (1-i) \sqrt\pi\,  (-k) ^{-1/2} h(0) {\rm e}^{\frac i\eps \lambda}
 \e^{-\frac i \eps  k \int_{0}^\sigma \tau d\tau } \vec Y_+(\sigma,0) \eta_0
+O(\eps)
\end{align*}
with $h(0)=\left. \left( \vec Y_+(\sigma,0), \frac d{d\sigma} \vec Y_-(\sigma,0)\right)\right|_{\sigma=0} = -i\theta/2$ and $\lambda= k \int_{\sigma_0}^ 0\tau d\tau$. 
\end{enumerate} 
We observe that, with the notations of Ex.~\ref{gamma:TM},
$$ - (1-i) \sqrt\pi\,  (-k) ^{-1/2} h(0) = \sqrt{ \frac{2\pi}{ik}}\frac \theta 2 = \sqrt { \frac{2\pi}{ik}} \gamma^\flat=\kappa .$$

\begin{proof} [Proof of Proposition~\ref{prop:HKTM} ]
By~\eqref{eq:toto}, 
 $$\psi^\eps(t,x): = \mathcal U^\eps_{H_{k,\theta}}(t,t_0)\psi^\eps_0(x) =  {\mathcal R}^\eps_{k,\theta}(x,x-t+t_0)\psi^\eps_0( x-t+t_0),$$
and, in view of Friedrichs' description, we   deduce 
\begin{align*}
\psi^\eps(t,x)=  &{\rm e}^{\frac {ik}{\eps}x (t-t_0) -\frac {ik}{2\eps}(t-t_0)^2 } \vec V_-(t,t_0,x) v^\eps_0(x-t+t_0) \\
&\qquad 
 + \sqrt\eps \kappa\ {\bf 1}_{0<x<t-t_0 } {\rm e}^{\frac {ik}{\eps}x(t-t_0)- \frac {ik}{2\eps} (t-t_0)^2  -\frac {i}{\eps} x^2}v^\eps_0(x-t+t_0)  \vec V_+(x,0,x) +o(\sqrt\eps) 
\end{align*}
where we have used 
$$\vec Y_-(x,x-t+t_0)= \vec V_-(t,t_0,x),\;\; \vec Y_+(\sigma,0)=\vec V _+(x,0,x)\;\;{\rm 
and}\;\;\lambda=-\frac k2 (x-t+t_0)^2.$$
Using~\eqref{reconstruction}, we write 
$$v^\eps_0(x-t+t_0) = (2\pi\eps)^{-d} \int_{z\in\R^{2d}} \langle v^\eps _0, g^\eps_z\rangle g^\eps_z(x-t+t_0) dz$$
and we observe that, in view of $S_-(t,t_0,z)= kq(t-t_0)+\frac k2 (t-t_0)^2$, we have 
$$g^\eps_z(x-t+t_0)={\rm e}^{- \frac {ik}{\eps}x (t-t_0)  +\frac{i}{2\eps}(t-t_0)^2} {\rm e}^{\frac i\eps S_-(t,t_0,z)} g^\eps_{\Phi_-^{t,t_0}(z)}(x)$$
Similarly, using
$$\displaylines{ 
S_+(t,t^\flat(z),q)=-\frac k 2 (t-t_0+q)^2,\;\;  S_-(t^\flat,t_0,q)= \frac{3k}2 q^2\cr
\mbox{and}\;\;\;\;
\Phi^{t,t^\flat(z)}_+(0,p^\flat(z))= (t-t_0+q, p-2kq-k(t-t_0)),
 ,\cr}$$
 we obtain 
\begin{align*}
g^\eps_z(x-t+t_0) &={\rm e}^{\frac i\eps k(x-t+t_0)(2q+(t-t_0) }  g^\eps_{\Phi_+^{t,t^\flat (z)}(0,p^\flat(z))}(x)\\
&= {\rm e}^{\frac i\eps S_-(t^\flat,t_0,q)+\frac i\eps S_+(t,t^\flat,0) +  \frac{k} {i\eps} (x-q-t+t_0)^2}  g^\eps_{\Phi_+^{t,t^\flat (z)}(0,p^\flat(z))}(x)\\
&= {\rm e}^{\frac i\eps S_-(t^\flat,t_0,q)+\frac i\eps S_+(t,t^\flat,0) }  {\rm WP}^\eps_{\Phi_+^{t,t^\flat (z)}(0,p^\flat(z))}( \mathcal T^\flat g^{1})
\end{align*}
Putting these elements together, we are left with
\begin{align*}
&\psi^\eps(t,x)= o(\sqrt\eps) +  (2\pi\eps)^{-d} \int_{z\in\R^{2d}} \langle v^\eps _0, g^\eps_z\rangle {\rm e}^{\frac i\eps S_-(t,z) }
 \vec V_-(t,t_0 ,x )g^\eps_{\Phi_-^{t,t_0}(z)}dz\\
& \;\; + \sqrt\eps  \kappa 
(2\pi\eps)^{-d} \int_{z\in\R^{2d}} {\bf 1}_{0<x<t-t_0 } \langle v^\eps _0, g^\eps_z\rangle {\rm e}^{\frac i\eps S_-(t^\flat,t_0,q)+\frac i\eps S_+(t,t^\flat,0)}  \vec V_+(x, 0, x) g^\eps_{\Phi_+^{t,t^\flat }(z)}dz.
\end{align*}
Using Taylor expansion and Lemma~\ref{lem:IPP}, we can transform the first part of the right-hand side of the preceding equation: 
\begin{align*}
(2\pi\eps)^{-d}& \int_{z\in\R^{2d}} \langle v^\eps _0, g^\eps_z\rangle {\rm e}^{\frac i\eps S_-(t,z) }
 \vec V_-(t,t_0 ,x )g^\eps_{\Phi_-^{t,t_0}(z)}dz\\
 &=
 (2\pi\eps)^{-d} \int_{z\in\R^{2d}} \langle v^\eps _0, g^\eps_z\rangle {\rm e}^{\frac i\eps S_-(t,z) }
 \vec V_-(t,t_0 ,q_-(t) )g^\eps_{\Phi_-^{t,t_0}(z)}dz +O(\eps)
 \end{align*}
 in $L^2(\R^d)$, which allows to identify the first term of~\eqref{HKcros} since $q_-(t)=q+t-t_0$. In the second term, one can treat similarly the term $\vec V_+(x,0, x)$ that turns into (using also~\eqref{prop:r})
 $$\vec V(t-t_0+q,0,t-t_0+q)=\vec V(t,t_0-q,t-t_0+q)= \vec V(t,t^\flat(z), t-t^\flat(z)$$
It remains to turn the   discontinuous function $ x\mapsto {\bf 1}_{0<x<t-t_0 }$ into 
 $${\bf 1}_{0<q_+(t)<t-t_0 }= {\bf 1}_{q<0} {\bf 1}_{t>t_0-q}= {\bf 1}_{q<0} {\bf 1}_{t>t^\flat(z)}.$$
 One then takes advantage of the averaging in time to use the estimate of~\ref{lem:IPP} despite the discontinuity.
One  regularizes the discontinuous function which will differ from its regularization on a set of small Lebesgue measure in the variable~$t$.
 One can then use the preceding argument on the regularized term and gets rid of the correction ones  by  estimating them thanks to the estimate~\eqref{lem:IPP} and using the smallness of integrals on $\chi$ on sets of small Lebesgue measures. 
\end{proof} 

\end{document}